\numberwithin{equation}{section}
\newcommand{\be}{\begin{equation}}
\newcommand{\ee}{\end{equation}}
\newcommand{\ba}{\begin{eqnarray}}
\newcommand{\ea}{\end{eqnarray}}
\newtheorem{theorem}{Theorem}[section]
\newtheorem{proposition}[theorem]{Proposition}
\newtheorem{remark}[theorem]{Remark}
\newtheorem{question}[theorem]{Question}
\newtheorem{lemma}[theorem]{Lemma}
\begin{document}

\title [The cost of null controllability for the Stokes system]{A hyperbolic system and the cost of the null controllability for the Stokes system} 

\author[F. W. Chaves-Silva]{F. W. Chaves-Silva}
\address{BCAM -- Basque Center for Applied Mathematics
Mazarredo 14, 48009 Bilbao, Basque Country, Spain}
\email{chaves@bcamath.org}

\keywords{Stokes system; null controllability;  cost of the  controllability; hyperbolic system with a pressure term}

\subjclass[2010]{Primary: 93B05, 93B07; Secondary: 35K40, 35L51.}

\maketitle
\begin{abstract} 
This paper is devoted  to study  the cost of the null controllability for the Stokes system. Using the control transmutation method we show that the cost of driving the Stokes system to rest at time $T$ is of order $e^{C/T}$ when $T \longrightarrow 0^+$, i.e., the same order of controllability as for the heat equation. For this to be possible,  we are led  to prove a new exact controllability result for a  hyperbolic system with a resistance term, which will be done under assumptions on the control region.
\end{abstract}

%%%%%%%%%%%%%%%%%%%%%%%%%%%%%%%%%%%%%%%%%%%%%%%%%%%%%%%%%%%%%%%%%%%%%%%%%%%%%%%%%%%%%%%%%%%
%%%%%%%%%%%%%%%%%%%%%%%%%%%%%%%%%%%%%%%%%%%%%%%%%%%%%%%%%%%%%%%%%%%%%%%%%%%%%%%%%%%%%%%%%%%%%%%%%%%
%%%%%%%%%%%%%%%%%%%%%%%%%%%%%%%%%%%%%%%%%%%%%%%%%%%%%%%%%%%%%%%%%%%%%%%%%%%%%%%%%%%%%%%%%%%%%%%%%%%
%%%%%%%%%%%%%%%%%%%%%%%%%%%%%%%%%%%%%%%%%%%%%%%%%%%%%%%%%%%%%%%%%%%%%%%%%%%%%%%%%%%%%%%%%%%%%%%%%%%%%

\section{Introduction}
Let  $\Omega \subset \mathbb{R}^N$ $(N \geq 2)$ be a bounded connected open set, whose boundary $\partial \Omega$ is regular enough. Let $T > 0$  and let  $\omega$  be a nonempty subset of $\Omega$ which will usually be referred to as a \textit{control domain}. We will use the notation $Q :=  \Omega \times (0,T) $ and  $\Sigma := \partial \Omega \times (0,T)$ and we will denote by $\nu(x)$ the outward normal to $\Omega$ at the point $x \in \partial \Omega$.

Given $u_0 \in L^2(\Omega)$, it is well-known (see  \cite{FC-Z, F-I1}) that  there exists $f \in L^2(\omega \times (0,T))$ such that the associated solution $v$ to the heat equation

\begin{equation}\label{heat}
\left |   
\begin{array}{ll}
v_t  - \Delta v   = f1_\omega &  \mbox{in}  \    Q,  \\
v = 0 & \mbox{on} \  \Sigma, \\
v(0) = v_0 & \mbox{in} \ \Omega
\end{array}
\right. 
\end{equation}
satisfies:
\be
v(T)=0.
\ee
In other words, the heat equation is \textit{null controllable} for any control domain and any initial data $v_0 \in L^2(\Omega)$. Moreover, one also has the following estimate:
\be\label{costheat}
||f1_\omega||_{L^2(Q)} \leq C_h||v_0||_{L^2(\Omega)},
\ee
for a constant $C_h$, the \textit{cost of controllability for the heat equation},  of the form  $e^{C(\Omega, \omega)(1+1/T)}$, i.e., the heat equation has a cost of controllability of order $e^{C/T}$ as $T \longrightarrow 0^+$.

As pointed out in \cite{EZ} (see also \cite{EZ-1, Miller, Miller-1, TT}), the main reason for the form of the constant $C_h$ in \eqref{costheat} is due to the fact that the fundamental solution of the heat equation in $\mathbb{R}^N$ is given by 
 \be\label{funda}
 \Phi(x,t) = \frac{1}{(4\pi t)^{N/2}}e^{-\frac{|x|^2}{4t}}.
 \ee

As in the case of the heat equation, if one now considers the Stokes system
\begin{equation}\label{stokes}
\left |   
\begin{array}{ll}
y_t - \Delta y  +\nabla p = g1_\omega &  \mbox{in}  \   Q,  \\
div \ y  = 0 &  \mbox{in}  \    Q,  \\
y = 0 & \mbox{on} \ \Sigma, \\
y(0) = y_0 & \mbox{in} \ \Omega,
\end{array}
\right. 
\end{equation}
it is also well-known (see, for instance, \cite{FC-G-P}) that, given $y_0 \in L^2(\Omega)$ with  $ div \ y_0 =0$,   there exists $g \in L^2(\omega \times (0,T))$ such that the associated solution $y_0$ of \eqref{stokes} satisfies:
$$
y(T) = 0.
$$
Nevertheless, unlike the case of the heat equation, for the Stokes system, the known results in the literature (see, for instance, \cite{FC-G-P}) give 
\be\label{coststokes}
||g1_\omega||_{L^2(Q)} \leq C_S||y_0||_{L^2(\Omega)},
\ee
for a constant $C_S$, the \textit{cost of controllability for the Stokes system}, of the form  $e^{C(\Omega, \omega)(1+1/T^4)}$, i.e., the Stokes system has a cost of controllability   at most of order $e^{C/T^4}$ as $T \longrightarrow 0^+$.

Since the fundamental solutions of the heat equation and the Stokes system have, at least for $N=2, 3$, the same behavior in time (see \cite{Guen, Guen1, Solo}), looking to \eqref{costheat} and \eqref{coststokes},  the following natural question arises:

\begin{question}\label{q1}
Do the costs of the controllability for the heat equation and the Stokes system have the same order in time  as $T \longrightarrow 0^+$? 
\end{question}

When trying to answer Question \ref{q1}, the first attempt is to analyze  the different ways one can prove \eqref{costheat} and \eqref{coststokes}. In fact, there are at least two different ways one can prove \eqref{costheat}. The first one is based on spectral decompositions, the so-called Lebeau-Robbiano strategy (see \cite{LR}), the  second one is  based on the use of Carleman inequalities (see  \cite{FC-Z, F-I1}). For the Stokes system,  since one must deal with the pressure, it seems that a Lebeau-Robbiano strategy  is very difficult to prove and, as far as we know, it has not been proved yet to hold. Consequently,  the most known method used to prove \eqref{coststokes} is based on Carleman inequalities (see \cite{FC-G-P}).  

The main difference when proving \eqref{costheat} and \eqref{coststokes} by means of Carleman inequalities are the weights one must use. Indeed, for the heat equation the weights used are of the form 
\begin{equation}\label{weight-heat-90}
\rho(t) = \frac{e^{C/(t(T-t))}}{t(T-t)},
\end{equation}
while for the Stokes system the weights take the form 
\begin{equation}\label{weight-stokes-90}
\rho(t) = \frac{e^{C/(t^4(T-t)^4)}}{t^4(T-t)^4}. 
\end{equation}

If we were able to use weights as \eqref{weight-heat-90} for the Stokes system  then the two equations would have costs of controllability of same order. However, a careful analysis in both proofs indicates that the obstruction to have weights of the form \eqref{weight-heat-90} for the Stokes system is due to the pressure term and that, probably, it is of purely technical nature.

The main objective of this paper is to show that, at least for good geometries, the heat and the Stokes system have costs of controllability of same order as the time goes to zero. Our strategy will  not be based on the use of Carleman inequalities but rather on the application of the Control Transmutation Method (CTM). 

In order to use the CTM,  we are led to study the null controllability of the  following hyperbolic system with a pressure term:
\begin{equation}\label{hyperpress}
\left |   
\begin{array}{ll}
u_{tt}  - \Delta u + \nabla p = h1_{\omega} &  \mbox{in}  \   Q,  \\
div \ u  = 0 & \mbox{in} \  Q, \\
u = 0 & \mbox{on} \  \Sigma, \\
u(0) = u^0,  u_t(0) = u^1 & \mbox{in} \  \Omega.
\end{array}
\right. 
\end{equation}

The idea  is as follows. If one can show that system \eqref{hyperpress} is null controllable, then the CTM can be applied to guarantee the null controllability of the Stokes system \eqref{stokes}. Moreover, if  the cost of  controlling \eqref{hyperpress} is known,  then the cost of the controllability for  \eqref{stokes} can be obtained explicitly (see \cite{Miller}).

It is important to mention that systems like  \eqref{hyperpress} are  simple models of dynamical elasticity for incompressible materials. They also appear in coupled elasto-thermicity  problems where one of the coupling parameters (related to compressibility properties) tends to infinity (see \cite{Lions-1}).

Concerning the controllability of \eqref{hyperpress}, as far as we know, the only result available in the literature is \cite{Rocha}. There, the author shows the exact controllability of \eqref{hyperpress} when the control is acting on a part of the boundary. However, it seems that no controllability results are known when the control is acting internally, i.e., acting on a part of the domain. The main reason for that seems to be the fact that system \eqref{hyperpress} is not of Cauchy-Kowalewski type, which makes impossible to  use directly  Holgrem's Theorem as in the case of  the wave equation.

This paper is organized as follows.  In section \ref{sec2},  we prove  that, for regular initial data, system \eqref{stokes} has the a cost of controllability of the same order of  the one for the heat equation. In section \ref{sec4}, we consider less regular data and improve the results of section \ref{sec2}. Section \ref{prooftheorem2.2} is devoted to prove the internal null controllability of system \eqref{hyperpress}.

\section{The Stokes system with regular initial data}\label{sec2}

In this section, we prove that if the initial data is regular enough then the Stokes system \eqref{stokes} is null controllable with a cost of order $e^{C/T}$ as $T \longrightarrow 0^+$.  Our proof is based on the Control Transmutation Method in the spirit of \cite{Miller}  and a null controllability result for system \eqref{hyperpress}.

We  assume that $\Omega$ is star-shaped with respect to the origin, i.e., there exists $ \gamma > 0 $ such that
$$
x \cdot \nu (x) \geq \gamma > 0,  \ \  \forall x \in \partial \Omega.
$$ 
and we define
\begin{align}
R_0 = \displaystyle \max\{|m(x)|, \ x \in \bar{\Omega}\}.
\end{align}

Our control region  $\omega$ will be   a nonempty subset of $ \Omega$  satisfying
\begin{equation}\label{hipm}
 \exists \mathcal{O} \subset \mathbb{R}^N, \mathcal{O} \ \mbox{being a neighborhood of } \  \partial \Omega \ \mbox{and}  \ \omega = \Omega \cap \mathcal{O}.
 \end{equation}

We also define the following usual spaces in  the context of fluid mechanics:

$$
\mathcal{V} = \{  v \in C^{\infty}_0(\Omega); \ div \ v =0 \},
$$
$$
V =  \overline{\mathcal{V}}^{H^1_0(\Omega)^N} =  \{ u \in H^1_0(\Omega)^N; \ div \ u = 0\},
$$
$$
H = \overline{\mathcal{V}}^{L^2(\Omega)^N} = \{ u \in L^2(\Omega)^N; \ div \ u = 0, u \cdot \nu = 0 \ \mbox{on} \ \partial \Omega \}.
$$

%We have that 
%$$
%V \subset H = H' \subset V',
%$$
%with dense and continuous injections.

%It is well-known (see, for instance, \cite{Tem})  that $L^2(\Omega)^N = H\oplus H^{\perp}$, where $H^{\perp} = \{ u \in L^2(\Omega)^N; \ u =\nabla p, \ p \in H^1(\Omega)\} $.

The main result of this section is stated as follows.

\begin{theorem}\label{nulls}
Assume  $\omega$ satisfies \eqref{hipm}, $y_0 \in V$ and let  $0 < T\leq 1$. Then there exists a control $g \in L^2(\omega \times (0,T))$ such that the solution $y$ of \eqref{stokes} satisfies: 
$$
y(T)=0.
$$
Moreover, there exist positive constants $C_1$ and  $C_2$, depending only on $\Omega$ and $\omega$,  such that
\begin{equation}\label{costs}
\int \! \! \! \int_{\omega \times (0,T)}|g|^2dxdt \leq C_1e^{C_2/T}||y_0||^2_V.
\end{equation}
\end{theorem}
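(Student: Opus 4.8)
The plan is to recast both systems in the abstract framework for which Miller's control transmutation method applies, and then to import the exact controllability of the hyperbolic system \eqref{hyperpress} established in Section \ref{prooftheorem2.2}. For the abstract reformulation, let $\mathbb{P}: L^2(\Omega)^N \to H$ be the Leray projection and let $A = -\mathbb{P}\Delta$ be the Stokes operator, with domain $D(A) = H^2(\Omega)^N \cap V$. Then $A$ is self-adjoint, positive and has compact resolvent, so there is an orthonormal basis $\{e_k\}_{k\ge 1}$ of $H$ made of eigenfunctions, with eigenvalues $0 < \lambda_1 \le \lambda_2 \le \cdots \to \infty$. Applying $\mathbb{P}$ to \eqref{stokes} and \eqref{hyperpress} removes the pressure and reduces them respectively to the parabolic problem $y_t + Ay = \mathbb{P}(g1_\omega)$, $y(0)=y_0$, and the hyperbolic problem $u_{tt} + Au = \mathbb{P}(h1_\omega)$, $(u(0),u_t(0))=(u^0,u^1)$, the pressure being recovered a posteriori from the orthogonal complement of $H$. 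Thus both systems are governed by the \emph{same} operator $A$, and the control operator is $B = \mathbb{P}(\,\cdot\,1_\omega)$.

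Next I invoke the result of Section \ref{prooftheorem2.2}: under \eqref{hipm} and the star-shapedness of $\Omega$, system \eqref{hyperpress} is exactly controllable in some time $L = L(\Omega,\omega) > 0$; that is, for every $(u^0,u^1) \in V \times H$ there exists $h \in L^2(\omega\times(0,L))$ with $\|h\|_{L^2(\omega\times(0,L))} \le \kappa_L\, \|(u^0,u^1)\|_{V\times H}$ driving $(u,u_t)$ to rest at time $L$. The geometric hypotheses are precisely those for which the multiplier method yields the observability needed for the associated hyperbolic problem.

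With the hyperbolic controllability in hand, I apply the control transmutation method in the form of \cite{Miller}. The starting point is the subordination identity
\be
e^{-tA}\phi = \int_{\R} \frac{1}{\sqrt{4\pi t}}\, e^{-s^2/(4t)}\, \cos\!\big(s\sqrt{A}\big)\phi \, ds,
\ee
which expresses the Stokes semigroup as a Gaussian average of the wave-Stokes group. Replacing the free wave $\cos\!\big(s\sqrt A\big)\phi$ by the \emph{controlled} trajectory of \eqref{hyperpress} issued from $(u^0,u^1) = (y_0,0)$, and integrating the wave control $h$ against a transmutation kernel $k(t,s)$ built from the one-dimensional heat kernel and supported in $|s|\le L$, produces a control $g(x,t) = \int k(t,s)\,h(x,s)\,ds$ for the Stokes system with $y(T)=0$. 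Because $k$ acts only on the time variables, $g(\cdot,t)$ inherits the support of $h(\cdot,s)$, so $g$ is supported in $\omega$; and the Gaussian decay of the kernel, together with the truncation at scale $L$, gives the bound $\|g\|_{L^2(\omega\times(0,T))}^2 \le C_1 e^{C_2/T}\|y_0\|_V^2$, with $C_2$ proportional to $L^2$. The requirement $y_0 \in V$ enters exactly here: it is what places the transmuted initial data $(y_0,0)$ in the energy space $V\times H$ of the hyperbolic problem.

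The genuinely new and hardest ingredient is the internal exact controllability of \eqref{hyperpress} used above, which is delicate because the pressure term destroys the Cauchy–Kovalevskaya structure and prevents a direct appeal to Holmgren's theorem; this is treated separately in Section \ref{prooftheorem2.2}. Granting that result, the remaining difficulties in the present proof are of a verificational nature: checking that the abstract hypotheses of the transmutation method hold for the Stokes operator $A$ despite the pressure (which is absorbed by $\mathbb{P}$), confirming that the convolution structure preserves the localization in $\omega$, and carefully tracking the dependence of the transmutation kernel on $L$ and $T$ so as to extract the sharp exponent $C_2/T$.
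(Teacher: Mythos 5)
Your proposal is correct and takes essentially the same route as the paper: both rest on the internal null controllability of the hyperbolic system \eqref{hyperpress} (Theorem \ref{nullw}, proved in Section \ref{prooftheorem2.2}) combined with Miller's controlled fundamental solution $k(t,s)$ of the one-dimensional heat equation (Lemma \ref{controlledfundamentalsolution}), and both produce the Stokes trajectory and control by the same transmutation formulas $y(t)=\int k(t,s)\,u(s)\,ds$ and $g(t)=\int k(t,s)\,h(s)\,ds$ applied to the controlled hyperbolic solution issued from $(y_0,0)$, yielding the bound $C\gamma e^{\alpha L^2/T}\|y_0\|_V^2$. The only difference is presentational: you phrase the reduction abstractly through the Leray projection and the subordination identity, whereas the paper works directly with the PDE and checks by integration by parts in the pseudo-time variable that the transmuted pair solves \eqref{stokes-system}.
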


\begin{proof}[Proof of Theorem \ref{nulls}]

For the  proof of Theorem \ref{nulls}, we need the following results.

\begin{theorem}\label{nullw}
Assume  $\omega$ satisfies \eqref{hipm}. Then there exists $T_0 > 0$ such that,  for any  $T >  T_0$ and any $(u_0,u_1) \in V\times H$, we can find  a control $h \in L^2(0,T; H)$ such that the associated solution $u$ of \eqref{hyperpress} satisfies:
$$
u(T) =u_t(T) =0.
$$
Moreover, there exists $C>0$ such that 
\begin{equation}\label{costw}
\int \! \! \! \int_{\omega \times (0,T)}|h|^2dxdt  \leq C\bigl(||u_0||^2_V + |u^1|_{H}^2\bigl).
\end{equation}
\end{theorem}

\begin{lemma}\label{controlledfundamentalsolution}
There exists a positive constant $\alpha^*$  such that, for all $\alpha > \alpha^*$, there exists $\gamma > 0$ having the property that, for all $L > 0$ and $ T \in (0, \inf (\pi/2, L)^2]$, there exists a distribution $ k \in C([0,T]; \mathcal{M}(-L,L))$ satisfying
\begin{equation}\label{obs12}
\left |   
\begin{array}{ll}
k_{t}   =   \partial_{s}^2k   \ \  \mbox{in}  \  \ \mathcal{D'}((0,T)\times (-L,L)) ,  \\
k(0,x) = \delta(0),\\
k(T,x) = 0, \\
||k||^2_{L^2((0,T)\times (-L,L))}  \leq \gamma e^{\alpha L^2/T}.
\end{array}
\right. 
\end{equation}

\end{lemma}
 
We prove Theorem  \ref{nullw} in section \ref{prooftheorem2.2}. A proof of Lemma  \ref{controlledfundamentalsolution} can be found in \cite{Miller}.

Let us now  introduce two different time intervals $(0,T)$  and $(0,L)$ and consider the two systems
\begin{equation} \label{stokes-system}
\left |   
\begin{array}{ll}
y_t  - \Delta y + \nabla p = g1_{\omega} &   \mbox{in}  \  Q_t:= \Omega \times (0,T) ,  \\
div \ y  = 0 & \mbox{in} \ \ Q_t, \\
y = 0 & \mbox{on} \  \Sigma_t := \partial \Omega \times (0,T), \\
y(0) = y_0 & \mbox{in} \  \Omega
\end{array}
\right. 
\end{equation}
and 
\begin{equation} \label{hyperbolic-system}
\left |   
\begin{array}{ll}
u_{ll} - \Delta u + \nabla q = h1_{\omega}  &  \mbox{in}  \    Q_l :=\Omega \times (0,L),  \\
div \ u  = 0 & \mbox{in} \ Q_l, \\
u = 0 & \mbox{on}  \ \Sigma_l := \partial \Omega \times (0,L), \\
u(0) = y_0, \ u_l(0) = 0 & \mbox{in} \ \ \Omega
\end{array}
\right. 
\end{equation}
in $ \Omega \times (0,T)$ and $\Omega \times (0,L)$, respectively. Here, $l$ plays the role of a pseudo-timevariable.
\vskip0.2cm

Taking  $L > T_0$, where $T_0$ is the minimal time of Theorem \ref{nullw},  it follows from Theorem \ref{nullw}  that the system (\ref{hyperbolic-system}) is null controllable,  with a control $h \in L^2(\omega \times (0,L))$ satisfying \eqref{costw}.

\vskip0.2cm

Next, we extend $k$ by zero outside $[0,T]\times(-L,L)$,  $u$ and $h$ by reflection to $[-L,0]$ and by zero outside $[-L,L]$ and set
\be\label{ys}
y(t) = \int k(t,s)u(s)ds
\ee
and
\be\label{gs}
g(t) = \int k(t,s) h(s)ds.
\ee

From \eqref{obs12}, we see that
$$
y(0) = y_0 \ \text{and}  \ y(T) = 0
$$
and from \eqref{costw} and $\eqref{obs12}_4$, we have that 
$$
\int \! \! \! \int_{\omega \times (0,T)}|g|^2dxdt \leq C \gamma e^{\alpha L^2/T} ||y_0||^2_V.
$$

We finish the proof showing  that  the pair $(y,g)$ solves, together with some $p$, the Stokes system \eqref{stokes-system}.

First, it is not difficult to see that 
$$
div \ y = 0 \ \text{in} \ Q_t \ \text{and} \ y = 0 \ \text{on} \ \Sigma_t. 
$$

Now, for any $\varphi \in V$, we have

$$
<y(t),\varphi>_H = <\int k(t,s)u(s)ds,\varphi>_H,
$$
which implies 
$$
<y_t(t),\varphi>_H = <\int k_t(t,s)u(s)ds,\varphi>_H.
$$

Using  the properties of  $k$, we see that

$$
<y_t(t),\varphi>_H= <\int k_{ss}(t,s)u(s)ds,\varphi>_H.
$$

Integrating by parts, and using the fact that $u(-L) = u(L) = u_l(-L) = u_l(L) = 0$, we obtain

$$
<y_t(t),\varphi>_H = <\int k(t,s)u_{ss}(s)ds,\varphi>_Hds,
$$
i.e., 
$$
<y_t(t),\varphi>_H = \int k(t,s)<u_{ss}(s),\varphi>_Hds.
$$

Since $u$ is, together with some $q$,   solution of (\ref{hyperbolic-system}), we have
$$
<y_t(t),\varphi>_H= \int k(t,s)<\Delta u(s)+h1_{\omega},\varphi>_Hds.
$$

Therefore, 
$$
<y_t(t),\varphi>_{H} = <\int k(t,s)\Delta u(s)ds,\varphi>_H+ <\int k(t,s)h1_{\omega}ds,\varphi>_H.
$$

This last identity gives
\be\label{ws}
<y_t(t) - \Delta y(t),\varphi>_H=  <g(t)1_{\omega},\varphi>_H,
\ee
and the proof is finished.
\end{proof}

%%%%%%%%%%%%%%%%%%%%%%%%%%%%%%%%%%%%%%%%%%%%%%%%%%%%%%%%%%%%
%%%%%%%%%%%%%%%%%%%%%%%%%%%%%%%%%%%%%%%%%%%%%%%%%%%%%%%%%%%%%%%%%%%%%%%%%%%%%%%%%%%%%%%%%%%%%%%%%%%
%%%%%%%%%%%%%%%%%%%%%%%%%%%%%%%%%%%%%%%%%%%%%%%%%%%%%%%%%%%%%%%%%%%%%%%%%%%%%%%%%%%%%%%%%%%%%%%%%%%%%

%%%%%%%%%%%%%%%%%%%%%%%%%%%%%%%%%%%%%%%%%%%%%%%%%%%%%%%%%%%%%%%%%%%%%%%%%%%%%%%%%%%%%%%%%%%%%%%%%%%%%%%%%%%%%%%%%%%%%%%%%%%%%%%%%%%%%%%%%%%%%%%%%%%%%%%%%%%%%%%%%%%%%%%%%%%%%%%%%%%%%%%%%%%%%%%%%%%%%%%%%%%%%%%%%%%%%%%%%%%%%%%%%%%%%%%%%%%%%%%%%%%%%%%%%%%%%%%%%%%%%%%%%%%%%%%%%%%%%%%%%%%%%%%%%%%%%%%%%%%%%%%%%%%%%%%%%%%%%%%%%%%%%%%%%%

\section{The Stokes system with less regular data}\label{sec4}
In this section we improve the result obtained in section \ref{sec2}. Indeed, we prove that we can take less regular initial data and still have null controllability for the Stokes system with a cost of order $e^{C/T}$ as $T \longrightarrow 0^+$. In order to show the result, we combine Theorem \ref{nulls}, energy inequalities and the smoothing effect of the Stokes system. 

The result is as follows. 
\begin{theorem}\label{nulls-1}
Assume  $\omega$ satisfies \eqref{hipm},  $y_0 \in H$ and let $0 < T\leq 1$. Then there exists a control $g \in L^2(\omega \times (0,T))$ such that the solution $y$ of \eqref{stokes} satisfies: 
$$
y(T)=0.
$$
Moreover, there exist positive constants $C_1$ and  $C_2$, depending only on $\Omega$ and $\omega$, such that
\begin{equation}\label{costs}
\int \! \! \! \int_{\omega \times (0,T)}|g|^2dxdt \leq  C_1e^{C_2/T}|y_{0}|^2_H.
\end{equation}
\end{theorem}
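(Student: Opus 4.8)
The plan is to bootstrap Theorem \ref{nulls} by exploiting the parabolic smoothing of the free Stokes flow. Since the only gap between the two statements is the regularity of the datum ($V$ versus $H$), I would split the interval as $(0,T) = (0,T/2)\cup(T/2,T)$, switch off the control on the first half so that the uncontrolled Stokes semigroup regularizes $y_0 \in H$ into an element of $V$, and then invoke Theorem \ref{nulls} on the second half. The cost on the second half is governed by $e^{C_2/(T/2)}=e^{2C_2/T}$ times the square of the $V$-norm of the regularized state, and the whole point is that this $V$-norm is quantitatively controlled by $|y_0|_H$ with an explicit, only polynomially bad, blow-up in $T$.

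For the smoothing step I set $g\equiv 0$ on $(0,T/2)$ and let $y$ be the free Stokes solution with $y(0)=y_0\in H$. Writing $A=-P\Delta$ for the Stokes operator ($P$ the Leray projector), I would use the two standard energy identities. Taking the $H$-inner product of the equation with $y$ (the pressure term dropping out since $\operatorname{div} y=0$ and $y|_{\partial\Omega}=0$) gives
$$
\frac12\frac{d}{dt}|y|_H^2 + ||y||_V^2 = 0,
$$
whence, after integration, $\int_0^{T/2}||y(s)||_V^2\,ds \le \tfrac12 |y_0|_H^2$. Taking instead the inner product with $Ay$ yields $\tfrac12\frac{d}{dt}||y||_V^2 = -|Ay|_H^2 \le 0$, so $s\mapsto ||y(s)||_V^2$ is non-increasing on $(0,T/2)$. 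Comparing the running value with its integral average I obtain $(T/2)\,||y(T/2)||_V^2 \le \int_0^{T/2}||y(s)||_V^2\,ds$, and therefore
$$
||y(T/2)||_V^2 \le \frac{1}{T}\,|y_0|_H^2 .
$$
In particular $y(T/2)\in V$ and is divergence-free, so it is an admissible datum for Theorem \ref{nulls}.

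It then remains to apply Theorem \ref{nulls} on $(T/2,T)$, a time interval of length $T/2 \le 1/2 \le 1$, with initial datum $y(T/2)$ (using the time-translation invariance of \eqref{stokes}). This produces a control $g\in L^2(\omega\times(T/2,T))$ driving the solution to rest at time $T$ and satisfying
$$
\int \! \! \! \int_{\omega \times (T/2,T)}|g|^2dxdt \le C_1\,e^{2C_2/T}\,||y(T/2)||_V^2 \le \frac{C_1}{T}\,e^{2C_2/T}\,|y_0|_H^2 .
$$
Extending $g$ by zero to $(0,T/2)$ and using uniqueness, the resulting pair solves \eqref{stokes} with $y(0)=y_0$ and $y(T)=0$. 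Finally, since $T\mapsto \tfrac1T e^{-\varepsilon/T}$ is bounded on $(0,1]$ for any $\varepsilon>0$, one has $\tfrac1T \le C_\varepsilon e^{\varepsilon/T}$ there, which absorbs the polynomial prefactor into the exponential and yields the claimed bound $\int\!\!\int_{\omega\times(0,T)}|g|^2 \le C_1'e^{C_2'/T}|y_0|_H^2$ with $C_2'>2C_2$.

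The main obstacle is to make the smoothing step fully rigorous: the monotonicity identity $\frac{d}{dt}||y||_V^2 = -2|Ay|_H^2$ presupposes enough regularity of the free flow to justify differentiating $||y||_V^2$ and integrating by parts (the term $|Ay|_H^2$ requires $y(t)\in D(A)$ for $t>0$). This is exactly the ``smoothing effect of the Stokes system'' alluded to in the statement, and I would secure it either through the analyticity of the Stokes semigroup on $H$ (which gives $||A^{1/2}e^{-tA}y_0||_H \le C t^{-1/2}|y_0|_H$ directly) or, more elementarily, by carrying out the energy computations on Galerkin approximations and passing to the limit. Everything else in the argument is routine.
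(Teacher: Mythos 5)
Your proof is correct, and it follows the paper's overall two-phase strategy --- let the system evolve freely so that parabolic smoothing takes $y_0 \in H$ into $V$, then invoke Theorem \ref{nulls} on the remaining time interval and glue the controls --- but the key quantitative step, the smoothing estimate, is obtained by a genuinely different and in fact sharper argument. The paper switches from free evolution to control at a small, unspecified time $\epsilon$ and estimates $\|y(\epsilon)\|_V$ via the change of variables $z(t)=e^{-1/t}y(t)$: the function $z$ solves the Stokes system \eqref{stokes-system-uncontrolled33} with source $\frac{1}{t^2}e^{-1/t}y$ and zero initial datum, and multiplying by $z_t$ yields $\|y(\epsilon)\|_V^2 \le \epsilon e^{2/\epsilon}|y_0|_H^2$, an exponentially bad constant in $\epsilon$ (implicitly, $\epsilon$ must be taken proportional to $T$ for the final bound to keep the form $C_1e^{C_2/T}$, and the control there acts on an interval of length $T-\epsilon$ rather than $T$; the paper leaves both points informal). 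You instead split at $T/2$ and use the two standard energy identities of the free Stokes flow --- dissipation, giving $\int_0^{T/2}\|y\|_V^2\,ds \le \tfrac12 |y_0|_H^2$, and monotonicity of $t\mapsto\|y(t)\|_V^2$, giving $(T/2)\,\|y(T/2)\|_V^2 \le \int_0^{T/2}\|y\|_V^2\,ds$ --- which is the classical semigroup smoothing bound $\|y(T/2)\|_V^2 \le |y_0|_H^2/T$, only polynomially bad in $T$. This buys you two things: the prefactor $1/T$ is trivially absorbed into the exponential (since $1/T\le C_\varepsilon e^{\varepsilon/T}$ on $(0,1]$), whereas the paper's factor $e^{2/\epsilon}$ directly enlarges the exponent; and the explicit choice of the switching time $T/2$ makes the bookkeeping of the controllability cost, $e^{C_2/(T/2)}=e^{2C_2/T}$, completely transparent. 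Your caveat about justifying $\frac{d}{dt}\|y\|_V^2=-2|Ay|_H^2$ (which needs $y(t)\in D(A)$ for $t>0$) is well placed and correctly resolved: either analyticity of the Stokes semigroup or a Galerkin limit makes the computation rigorous, exactly as the paper tacitly relies on maximal regularity for \eqref{stokes-system-uncontrolled33}.
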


\begin{proof}
%\begin{equation} \label{stokes-system-general}
%\left |   
%\begin{array}{ll}
%y_t  - \Delta y + \nabla p = v\chi_{\omega}  \ \  \mbox{in}  \  \  (0,T)\times\Omega,  \\
%div \ y  = 0, \ \, \mbox{in} \ \ (0,T)\times\Omega, \\
%y = 0 \ \ \mbox{on} \ \ (0,T)\times\Gamma, \\
%y(0) = y_0 \ \ \mbox{in} \ \ \Omega,
%\end{array}
%\right. 
%\end{equation}
%with $y_0 \in H$. 
%
%Our aim is to find $v \in L^2(0,T;H)$ such that $y(T) = 0$.  To  this end  we will consider the uncontrolled system, in other words, we consider the system
%
%\begin{equation} \label{stokes-system-uncontrolled}
%\left |   
%\begin{array}{ll}
%y_t  - \Delta y + \nabla p = 0\ \  \mbox{in}  \  \  (0,T)\times\Omega,  \\
%div \ y  = 0, \ \, \mbox{in} \ \ (0,T)\times\Omega, \\
%y = 0 \ \ \mbox{on} \ \ (0,T)\times\Gamma, \\
%y(0) = y_0 \ \ \mbox{in} \ \ \Omega,
%\end{array}
%\right. 
%\end{equation}
%with $y_0 \in H$.  And we know that the solution $y$  of the system  (\ref{stokes-system-uncontrolled}) satisfies $y \in L^2(0,T;V) \cap C([0,T];H)$. 

We begin choosing $\epsilon >0$ small enough and letting system \eqref{stokes-system} evolve freely in the interval $(0,\epsilon)$. From the smoothing effect of the Stokes system, we have that   $y(\epsilon) = y_{\epsilon}$ belongs to $ V$. We also have, thanks to Theorem \ref{nulls},  that there exists  $g \in L^2(\omega \times (0,T-\epsilon))$ such that the associated solution $y$ to the problem
\begin{equation} \label{stokes-system-general-1}
\left |   
\begin{array}{ll}
y_t  - \Delta y + \nabla p = g\chi_{\omega}  &  \mbox{in}  \    (0,T-\epsilon)\times\Omega,  \\
div \ y = 0 & \mbox{in} \ (0,T-\epsilon)\times\Omega, \\
y = 0 & \mbox{on} \ (0,T-\epsilon)\times\partial \Omega, \\
y(0) = y_{\epsilon} & \mbox{in} \ \ \Omega,
\end{array}
\right. 
\end{equation}
satisfies
$$
y(T-\epsilon) = 0.
$$
Moreover,
\begin{equation}\label{controlestimate11}
\int_0^{T-\epsilon}\!\!\!\!\int_{\omega}|g|^2dxdt \leq C\gamma e^{\alpha L^2/T}||y_{\epsilon}||^2_V.
\end{equation}

Let us now define the  functions $\overline{y}$ and $\overline{g}$ by  $\overline{y}(t+\epsilon) = y(t)$,  $\overline{g}(t+\epsilon) = g(t) $ for $0<t<T-\epsilon$. The functions  $\overline{y}$ and $\overline{g}$ are defined in $(\epsilon, T)$ and satisfy

\begin{equation} \label{stokes-system-general-2}
\left |   
\begin{array}{ll}
\overline{y}_t  - \Delta \overline{y} + \nabla \overline{p} = \overline{g}\chi_{\omega}  &  \mbox{in}  \  (\epsilon,T)\times\Omega,  \\
div \ \overline{y} = 0 & \mbox{in} \  (\epsilon,T)\times\Omega, \\
\overline{y} = 0 & \mbox{on}  \ (\epsilon,T)\times\partial \Omega, \\
\overline{y}(\epsilon) = y_{\epsilon} & \mbox{in} \  \Omega.
\end{array}
\right. 
\end{equation}
Inequality  (\ref{controlestimate11}) then becomes
\begin{equation}\label{controlestimate2}
\int_{\epsilon}^{T}\!\!\!\!\int_{\omega}|\overline{g}|^2dxdt \leq C\gamma e^{\alpha L^2/T}||y_{\epsilon}||^2_V.
\end{equation}

Next, we set
$$
g(t) = \left \{ 
\begin{array}{rcc}
0; & \mbox{if} & 0 <t< \epsilon ,\\
\overline{g}(t); &\mbox{if}& \epsilon \leq t < T.
\end{array}
\right.
$$

It is not difficult to see that the solution $y$ of (\ref{stokes-system}), with $g$ as a control,  fulfils  $y(T) = 0$. From (\ref{controlestimate2}), and the definition of $g$, we have the following estimate
\begin{equation}\label{controlestimate3}
\int_{0}^{T}\!\!\!\!\int_{\omega}|g|^2dxdt \leq C\gamma e^{\alpha L^2/T}||y_{\epsilon}||^2_V.
\end{equation}

%\section{Conclusions}

%Using the Control Transmutation Method we can obtain the null controllability for the Stokes System at a cost of order $e^{C/T}$. But it is obtained only under  geometric conditions in the control domain.

Let us now consider  system (\ref{stokes-system}) in the interval $[0,\epsilon]$, i.e., we consider the system

\begin{equation} \label{stokes-system-uncontrolled22}
\left |   
\begin{array}{ll}
y_t  - \Delta y + \nabla p = 0 &  \mbox{in}  \   (0,\epsilon)\times\Omega,  \\
div \ y  = 0 & \mbox{in} \  (0,\epsilon)\times\Omega, \\
y = 0 & \mbox{on} \  (0,\epsilon)\times\partial \Omega, \\
y(0) = y_0 & \mbox{in} \  \Omega,
\end{array}
\right. 
\end{equation}
with $y_0 \in H$.

We make  the change of variable $z(t) = e^{-\frac{1}{t}}y(t)$. This new function $z$  solves
\begin{equation} \label{stokes-system-uncontrolled33}
\left |   
\begin{array}{ll}
z_t  - \Delta z + \nabla p = \frac{1}{t^2}e^{-\frac{1}{t}}y &  \mbox{in}   \  (0,\epsilon)\times\Omega,  \\
div \ z = 0 & \mbox{in} \ (0,\epsilon)\times\Omega, \\
 z= 0 & \mbox{on} \  (0,\epsilon)\times \partial \Omega, \\
z(0) = 0 & \mbox{in} \  \Omega.
\end{array}
\right. 
\end{equation}

Using the fact that $\frac{1}{t^2}e^{-\frac{1}{t}}y \in L^2(0,\epsilon;H)$, and the regularity of the Stokes system,  we conclude  that $z \in L^2(0,\epsilon; H^2(\Omega))$ and that $z_t \in L^2(0,\epsilon;H)$.  

Multiplying (\ref{stokes-system-uncontrolled33}) by $z_t$  and integrating by parts, we get
\be\label{ener-1}
2|z_t(t)|^2_H + \frac{d}{dt} ||z(t)||^2_V = 2(\frac{1}{t^2}e^{-\frac{1}{t}}y(t),z_t)_H.
\ee
Integrating \eqref{ener-1}  from $0$ to $\epsilon$ and using Young's inequality, we obtain
$$
2\int_0^{\epsilon}|z_t(t)|^2_Hdt +  ||z(\epsilon)||^2_V \leq C_{\delta} \int_0^{\epsilon} |\frac{1}{t^2}e^{-\frac{1}{t}}y(t)|^2_Hdt    + \delta  \int_0^{\epsilon}|z_t|^2_Hdt,
$$
for any $\delta >0$.

Taking $\delta$ small enough, we have
\be\label{ener-2}
||z(\epsilon)||^2_V \leq  C\int_0^{\epsilon} |\frac{1}{t^2}e^{-\frac{1}{t}}y(t)|^2_Hdt
\ee
and since, for $\epsilon$ sufficiently small,  $\frac{1}{t^4}e^{-\frac{2}{t}} \leq e^{\frac{1}{\epsilon}}$, it follows that
$$
||z(\epsilon)||^2_V \leq   e^{\frac{1}{\epsilon}} \int_0^{\epsilon} |y(t)|^2_Hdt.
$$
Finally, using the fact that  $ ||y||^2_{L^2(0,\epsilon;H)} \leq \epsilon |y_0|^2_H$, we get from \eqref{ener-2} that
$$
||z(\epsilon)||^2_V \leq   \epsilon  e^{\frac{1}{\epsilon}} |y_0|^2_H,
$$
and, in particular, using the fact that $z(t) = e^{-\frac{1}{t}}y(t)$, we conclude  that
\begin{equation}\label{epsilonestimate}
||y(\epsilon)||^2_V \leq  \epsilon  e^{\frac{2}{\epsilon}} |y_0|^2_H.
\end{equation}

From \eqref{controlestimate3} and \eqref{epsilonestimate}, the result follows.
\end{proof}

\begin{remark}
Since $y_{\epsilon} \longrightarrow y_0$ in $H$, the norm of $y_{\epsilon}$ is not bounded in $V$. Hence, the right-hand side of  (\ref{controlestimate3}) is unbounded when $\epsilon \longrightarrow 0$. 
\end{remark}

%%%%%%%%%%%%%%%%%%%%%%%%%%%%%%%%%%%%%%%%%%%%%%%%%%%%%%%%%%%%%%%%%%%%%%%%%%%%%%%%%%%%%%%%%%%%%%%%%%%%%%%%%%%%%%%%%%%%%%%%%%%%%%%%%%%%%%%%%%%%%%%%%%%%%%%%%%%%%%%%%%%%%%%%%%%%%%%%%%%%%%%%%%%%%%%%%%%%%%%%%%%%%%%%%%%%%%%%%%%%%%%%%%%%%%%%%%%%%%%%%%%%%%%%%%%%%%%%%%%%%%%%%%%%%%%%%%%%%%%%%%%%%%%%%%%%%%%%%%%%%%%%%%%%%%%%%%%%%%%%%%%%%%%%%%%%%%%%%%%%%%%%%%%%%%%%%%%%%%%%%%%%%%%%%%%%%%%%%%%%%%%%%%%%%%%%%%%%%%%%%%%%%%%%%%%%%%%%%%%%%%%%%%%%%%%%%%%%%%%%

\section{Null controllability for the hyperbolic system}\label{prooftheorem2.2}

%%%%%%%%%%%%%%%%%%%%%%%%%%%%%%%%%%%%%%%%%%%%%%%%%%%%%%%%%%%%%%%%%%%%%%%%%%%%%%%%%%%%%%%%%%%%%%%%%%%%%%%%%%%%%%%%%%%%%%%%%%%%%%%%%%%%%%%%%%%%%%%%%%%%%%%%%%%%%%%%%%%%%%%%%%%%%%%%%%%%%%%%%%%%%%%%%%%%%%%%%%%%%%%%%%%%%%%%%%%%%%%%%%%%%%%%%%%%%%%%%%%%%%%%%%%%%%%%%%%%%%%%%%%%%%%%%%%%%%%%%%%%%%%%%%%%%%%%%%%%%%%%%%%%%%%%%%%%%%%%%%%%%%%%%%%%%%%%%%%%%%%%%%%%%%%%%%%%%%%%%%%%%%%%%%%%%%%%%%%%%%%%%%%%%%%%%%%%%%%%%%%%%%%%%%%%%%%%%%%%%%%%%%%%%%%%%%%%%%%%

This section is devoted to prove Theorem \ref{nullw} used in the proof of Theorem \ref{nulls}.  In order to prove the result, it is  convenient to write system \eqref{hyperpress}  in an abstract way. For that, we introduce  the Stokes operator $A: H^2(\Omega)^N \cap V \longrightarrow H$ given by
\be\label{stokesoperator}
Au := P(\Delta u),  
\ee
where $P: L^2(\Omega)^N \longrightarrow H$ is the orthogonal projection onto $H$ and $\Delta: H^2(\Omega)^N \cap H^1_0(\Omega)^N \longrightarrow L^2(\Omega)^N$ is the Laplace operator with Dirichlet boundary conditions. Thus, system \eqref{hyperpress}  is  equivalent to 
\begin{equation}\label{eq21}
\left |   
\begin{array}{ll}
u_{tt}  =  Au + h 1_{\omega}, \\
u(0) = u^0,  u_t(0) = u^1.
\end{array}
\right. 
\end{equation} 

The following theorem holds.
\begin{theorem}\label{existenciasol}
Let $(u^0,u^1, h) \in V \times H \times L^2(0,T;H )$. There exists a unique (weak) solution $ u$ of the problem (\ref{eq21}) such that
$$
u \in C([0,T]; V) \cap C^1([0,T]; H )
$$
and $u$ satisfies:
$$
\frac{1}{2}|u_t(t)|^2_H + \frac{1}{2}||u(t)||^2_V = \frac{1}{2}|u^1|^2_H + \frac{1}{2}||u^0||^2_V  + \int_0^t(h(s)1_{\omega},u_t(s))_{H}ds, \ \ \forall t \in [0,T].
$$

Moreover, the linear  mapping 
$$
V \times H \times L^2(0,T;H) \longrightarrow  C([0,T]; V) \cap C^1([0,T]; H )
$$
$$
(u^0,u^1, f) \mapsto u
$$
is continuous.
\end{theorem}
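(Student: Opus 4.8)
The plan is to treat \eqref{eq21} as an abstract second-order evolution equation governed by the Stokes operator $A=P\Delta$ and to construct the solution by the Faedo--Galerkin method combined with energy estimates. First I would record the spectral properties of $A$: since $-A$ is self-adjoint and positive on $H$ and the embedding $V\hookrightarrow H$ is compact (Rellich), $-A$ has compact inverse, so there is an orthonormal basis $\{w_j\}_{j\geq 1}$ of $H$ consisting of eigenfunctions, $-Aw_j=\lambda_j w_j$ with $0<\lambda_1\leq\lambda_2\leq\cdots\to\infty$. These $w_j$ also form an orthogonal basis of $V$, and $V=D((-A)^{1/2})$ with $\|u\|_V^2=-(Au,u)_H$; in particular $(Au,v)_H=-(u,v)_V$ for $u,v\in V$, where $(u,v)_V=(\nabla u,\nabla v)_{L^2}$, and this is the identity driving all the energy computations.

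Second, for each $m$ I would seek the Galerkin approximation $u_m(t)=\sum_{j=1}^m g_j^m(t)w_j$ solving the projected problem $(u_m'',w_j)_H=(Au_m,w_j)_H+(h1_{\omega},w_j)_H$ for $1\leq j\leq m$, with $u_m(0),u_m'(0)$ the $H$-projections of $u^0,u^1$. This is a linear system of ODEs with right-hand side in $L^2(0,T)$, so Carath\'eodory's theorem yields a unique $u_m$. Testing against $u_m'$ and using $(Au_m,u_m')_H=-\tfrac12\frac{d}{dt}\|u_m\|_V^2$, I obtain the discrete energy identity $\tfrac12\frac{d}{dt}\bigl(|u_m'|_H^2+\|u_m\|_V^2\bigr)=(h1_{\omega},u_m')_H$. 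Young's inequality and Gronwall then bound $u_m$ uniformly in $L^{\infty}(0,T;V)$ and $u_m'$ uniformly in $L^{\infty}(0,T;H)$ in terms of $\|u^0\|_V$, $|u^1|_H$ and $\|h\|_{L^2(0,T;H)}$. Extracting a subsequence with $u_m\rightharpoonup u$ weakly-$*$ in $L^{\infty}(0,T;V)$ and $u_m'\rightharpoonup u_t$ weakly-$*$ in $L^{\infty}(0,T;H)$, and passing to the limit in the weak formulation tested against functions $\psi(t)w_j$, shows that $u$ is a weak solution of \eqref{eq21}; the same energy bound applied to $u$ yields the asserted linear continuity of the solution map.

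The delicate points are the strong continuity $u\in C([0,T];V)\cap C^1([0,T];H)$ and the \emph{equality} in the energy law, since weak-$*$ limits only furnish the energy \emph{inequality} through lower semicontinuity of the norms. I would resolve this by regularization: first establish the identity for smooth data $(u^0,u^1,h)\in D(A)\times V\times C^1([0,T];H)$, where the approximate solutions are regular enough to test legitimately against $u_t$ and the identity holds exactly, and then pass to the limit using density of smooth data together with the continuity estimate already obtained, which propagates both the energy identity and the $C([0,T];V)\cap C^1([0,T];H)$ regularity to the general case. For uniqueness, because a weak solution with only $u_t\in H$ cannot be tested directly against $u_t$, I would use the Ladyzhenskaya--Lions device: for a solution with zero data, fix $s\in(0,T]$ and test the equation against the integrated function $\psi(t)=\int_t^s u(\tau)\,d\tau$ on $[0,s]$, extended by zero on $[s,T]$, which is an admissible test function with $\psi'(t)=-u(t)$; integrating by parts in $t$ produces a nonnegative quadratic expression that forces $u\equiv 0$. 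I expect this last step — upgrading the formal energy identity and the uniqueness argument to the low regularity of weak solutions — to be the main obstacle, while everything else is the standard Galerkin machinery for abstract hyperbolic equations.
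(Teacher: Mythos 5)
Your proposal is correct, but note that the paper does not actually prove Theorem \ref{existenciasol}: it declares the result standard and defers to the reference \cite{Tuc} (Tucsnak--Weiss), whose natural argument is semigroup-theoretic rather than Galerkin-based. In that route one rewrites \eqref{eq21} as a first-order system $U'=\mathcal{A}U+F$ on the energy space $V\times H$, with $\mathcal{A}(u,v)=(v,Au)$; since $-A$ is positive self-adjoint and $V=D((-A)^{1/2})$, the operator $\mathcal{A}$ is skew-adjoint for the energy inner product, so Stone's theorem gives a unitary $C_0$-group, and Duhamel's formula yields at once existence, uniqueness, the $C([0,T];V)\cap C^1([0,T];H)$ regularity, the continuity of the data-to-solution map, and (first for smooth data, then by density) the energy identity. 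Your Faedo--Galerkin construction is the more elementary and self-contained alternative: it avoids semigroup machinery and would survive time-dependent or nonlinear perturbations of $A$, but, as you rightly isolate, it must work harder at exactly the two points the group structure gives for free --- the upgrade from weak-$*$ limits (which only yield the energy \emph{inequality}) to strong continuity and energy \emph{equality}, which your regularization/density argument handles correctly, and uniqueness at the low regularity of weak solutions, for which your integrated test function $\psi(t)=\int_t^s u(\tau)\,d\tau$ (the Ladyzhenskaya--Lions device) is the standard and correct fix. One small point you should make explicit in the Galerkin step: the eigenfunctions of the Stokes operator are orthogonal in $V$ as well as in $H$, so the $H$-projection of $u^0$ onto the first $m$ modes has $V$-norm at most $\|u^0\|_V$ and converges to $u^0$ in $V$; this is what makes the energy bound uniform in $m$ and lets the initial data be attained in the right topology.
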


The  proof of Theorem \ref{existenciasol} is standard and, being far from the aim of this paper,   it will not be reproduced here (for a proof see, for instance, \cite{Tuc}).

\begin{remark}
Arguing as in chapter 2 of \cite{Tem} or in \cite{Simon}, it is possible to show the existence of a function $p \in H^{-1}(0,T;L^2_0(\Omega))$ such that  \eqref{hyperpress}  is satisfied in $\mathcal{D}'(Q)$. Moreover, there exists $C>0$ such that  
$$
||p||^2_{H^{-1}(0,T;L^2_0(\Omega))} \leq C(|u^1|^2_H + ||u^0||^2_V + ||h1_{\omega}||^2_{L^2(0,T;H)}).
$$
\end{remark}

By a classical duality argument (see, for instance, \cite{F-I1, Lions-livro}  ), it is not difficult to see  that proving Theorem \ref{nullw} is equivalent to show the existence of a positive  constant $C $ such that
\begin{equation} \label{desobs}
|\phi^0|_{H}^2 + ||\phi^1||^2_{V'} \leq C \int \! \! \! \int_{\omega\times (0,T)}|\phi|^2 dxdt,
\end{equation}
for all solutions of 
 \begin{equation}\label{eq22}
\left |   
\begin{array}{ll}
\phi_{tt}  = A\phi,   \\
\phi(0) = \phi^0,   \phi_t(0) = \phi^1,
\end{array}
\right. 
\end{equation} 
where $\phi^0 \in H$ and $\phi^1 \in V'$.

\begin{remark}\label{defultra}
Since the Stokes operator $A$ is an isomorphism from $V$ to $V'$, given  $(\phi^0,\phi^1) \in H\times V'$,  we define the solution $\phi$ of (\ref{eq22})  as 
$$
\phi =\psi_t,
$$
where $\psi$ is the unique solution of 
\begin{equation}\label{psi-trans}
\left |   
\begin{array}{ll}
\psi_{tt} = A\psi, \\
\psi(0) = A^{-1} \phi^{1},  \psi_t(0) = \phi^0.
\end{array}
\right. 
\end{equation} 
Following the arguments of \cite{Simon}, we can show that  for regular initial data the abstract problem  \eqref{eq22} is  equivalent to
\begin{equation}\label{hyperpress-1}
\left |   
\begin{array}{ll}
\phi_{tt}  - \Delta \phi + \nabla p = 0 &  \mbox{in}  \   Q,  \\
div \ \phi  = 0 & \mbox{in} \  Q, \\
\phi = 0 & \mbox{on} \  \Sigma, \\
\phi(0) = \phi^0,  \phi_t(0) = \phi^1 & \mbox{in} \  \Omega.
\end{array}
\right. 
\end{equation}
\end{remark}

Let us now  concentrate on proving \eqref{desobs}. The proof relies on some results that we prove below.
\begin{lemma}\label{equivalencia}
If, for every $(\phi^0,\phi^1) \in V \times H$,  the solution $\phi$ of (\ref{eq22}) satisfies 
 \begin{equation} \label{desobs2}
||\phi^0||_V^2 + |\phi^1|^2_{H} \leq C \int \! \! \! \int_{\omega \times (0,T)} |\phi_t|^2 dxdt,
\end{equation}
 for some  constant $C > 0 $, then inequality \eqref{desobs} holds for all  solutions of  \eqref{eq22} with initial data $(\phi^0,\phi^1)$ in $ H \times V'$.
 \end{lemma}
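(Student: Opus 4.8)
The plan is to exploit the structural fact recorded in Remark \ref{defultra}: the abstract equation $\phi_{tt} = A\phi$ is invariant under time differentiation, so that if $\psi$ solves it then $\psi_t$ solves it as well. This allows me to trade one degree of regularity in the Cauchy data for one degree of regularity in the observed quantity, thereby converting the ``strong'' observability inequality \eqref{desobs2}, which lives at the level $V \times H$ and observes $\phi_t$, into the ``weak'' inequality \eqref{desobs}, which lives at the level $H \times V'$ and observes $\phi$.

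Concretely, given $(\phi^0,\phi^1) \in H \times V'$, I would introduce the solution $\psi$ of \eqref{psi-trans}, that is, the solution of $\psi_{tt} = A\psi$ with $\psi(0) = A^{-1}\phi^1$ and $\psi_t(0) = \phi^0$. Since $A$ is an isomorphism from $V$ onto $V'$, we have $A^{-1}\phi^1 \in V$, so the data $(A^{-1}\phi^1, \phi^0)$ belong to $V \times H$ and $\psi$ is an admissible function for the hypothesis. By the very definition adopted in Remark \ref{defultra}, the solution $\phi$ of \eqref{eq22} with data $(\phi^0,\phi^1)$ is $\phi = \psi_t$.

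I would then apply \eqref{desobs2} to $\psi$, whose initial data $(\psi(0),\psi_t(0)) = (A^{-1}\phi^1,\phi^0)$ lie in $V \times H$. This gives
$$
||A^{-1}\phi^1||_V^2 + |\phi^0|^2_H \leq C \int \! \! \! \int_{\omega \times (0,T)} |\psi_t|^2 \, dx\, dt = C \int \! \! \! \int_{\omega \times (0,T)} |\phi|^2 \, dx\, dt,
$$
the last identity because $\psi_t = \phi$. Since $A : V \to V'$ is an isomorphism, $||A^{-1}\phi^1||_V$ and $||\phi^1||_{V'}$ are equivalent norms; absorbing the equivalence constant into $C$ yields
$$
|\phi^0|^2_H + ||\phi^1||^2_{V'} \leq C \int \! \! \! \int_{\omega \times (0,T)} |\phi|^2 \, dx\, dt,
$$
which is precisely \eqref{desobs}.

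The steps are all elementary, so I do not expect a genuine obstacle; the only care needed is in the bookkeeping. First, the regularity levels must match: data $(\phi^0,\phi^1) \in H \times V'$ should produce $\phi \in C([0,T];H) \cap C^1([0,T];V')$, and indeed $\phi = \psi_t$ inherits this from $\psi \in C([0,T];V) \cap C^1([0,T];H)$, since $\phi_t = \psi_{tt} = A\psi \in C([0,T];V')$. Second, the norm on $V'$ appearing in \eqref{desobs} must be the one transported by $A$ (equivalently, the dual norm associated with the Gelfand triple $V \hookrightarrow H \hookrightarrow V'$), so that the identification $||A^{-1}\phi^1||_V \simeq ||\phi^1||_{V'}$ is legitimate; with the natural choice $||u||_V = |(-A)^{1/2}u|_H$ this is in fact an isometry. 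No density argument is required, because $(A^{-1}\phi^1,\phi^0)$ lands exactly in the space $V \times H$ where the hypothesis is assumed to hold.
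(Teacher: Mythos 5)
Your proposal is correct and follows exactly the paper's own argument: apply the hypothesis \eqref{desobs2} to the solution $\psi$ of \eqref{psi-trans}, whose data $(A^{-1}\phi^1,\phi^0)$ lie in $V\times H$, use the identification $\phi=\psi_t$ from Remark \ref{defultra}, and conclude via the isomorphism $A:V\to V'$. Your additional remarks on regularity bookkeeping and the choice of the $V'$ norm only make explicit what the paper leaves implicit.
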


 \begin{proof}[Proof of Lemma \ref{equivalencia}]

Given $(\phi^0, \phi^1) \in H\times V' $, we consider $\psi$ solution of \eqref{psi-trans}, i.e., 
\begin{equation}\label{psi}
\left |   
\begin{array}{ll}
\psi_{tt}  = A\psi, \\
\psi(0) =  A^{-1} \phi^{1},  \psi_t(0) = \phi^0.
\end{array}
\right. 
\end{equation} 

Next, using the fact that $\phi =\psi_t$, and inequality  \eqref{desobs2},  we see that 
\begin{equation}\label{ob-1}
|| A^{-1} \phi^{1}||_V^2 + |\phi^0|^2_{H} \leq C\int \! \! \! \int_{\omega \times (0,T)}  |\phi|^2 dxdt.
\end{equation}
From \eqref{ob-1} and the fact that  $A: V\longrightarrow V'$ is an isomorphism, we finish the proof.
\end{proof}

\begin{lemma}\label{obsX}
Let $m \in C^1(\overline{\Omega})^N$. Then,  for all regular solutions of \eqref{eq22}, the following identity holds
\be
\left <  \nabla p, m \cdot \nabla \phi \right>_{L^2(Q)^N} =  - \left< \nabla p, \phi \cdot  \nabla m \right>_{L^2(Q)^N} + \left< \nabla p, \phi (div \ m) \right>_{L^2(Q)^N}.
\ee
\end{lemma}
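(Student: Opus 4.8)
The plan is to read the claimed identity as a purely \emph{spatial} integration-by-parts formula, in which the hyperbolic equation \eqref{eq22} plays no direct role: the only structural facts I would use are (i) $div\,\phi=0$, (ii) $\phi=0$ on $\Sigma$, and (iii) the fact that $\nabla p$ is a gradient, so that the mixed second derivatives of $p$ commute. The equation enters only through the word ``regular'': it guarantees that $\phi$ and the associated pressure $p$ (whose existence is provided by the Remark following Theorem \ref{existenciasol}) are smooth enough, and in particular that $\nabla p\in L^2(Q)^N$, for the manipulations below to be legitimate. First I would expand everything in components: writing $m=(m_1,\dots,m_N)$ and $\phi=(\phi_1,\dots,\phi_N)$,
\be
\langle\nabla p,\,m\cdot\nabla\phi\rangle_{L^2(Q)^N}=\int_Q\sum_{i,j}(\p_i p)\,m_j\,(\p_j\phi_i)\,dx\,dt.
\ee

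Next I would integrate by parts in $x_j$ in order to move the derivative off $\phi_i$. Since $\phi$ vanishes on $\Sigma$, the boundary contribution drops, and expanding $\p_j\big(m_j\,\p_i p\big)=(\p_j m_j)\,\p_i p+m_j\,\p_j\p_i p$ splits the integral into a first term proportional to $div\,m=\sum_j\p_j m_j$, which reassembles (with the appropriate sign) as the divergence term $\langle\nabla p,\,\phi\,(div\,m)\rangle_{L^2(Q)^N}$, and a remainder $\int_Q\sum_{i,j}m_j\,(\p_j\p_i p)\,\phi_i$ still carrying two derivatives on $p$.

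To treat this remainder I would invoke the gradient structure of the pressure, writing $\p_j\p_i p=\p_i\p_j p$, and integrate by parts a second time, now in $x_i$, moving $\p_i$ off $\p_j p$. The boundary term again vanishes because $\phi=0$ on $\Sigma$, and the product rule now produces one term in $\p_i m_j$ together with a term containing $\sum_i\p_i\phi_i=div\,\phi$, which is zero by incompressibility. After relabelling the summation indices $i\leftrightarrow j$, the surviving $\p_i m_j$ term is exactly $\langle\nabla p,\,\phi\cdot\nabla m\rangle_{L^2(Q)^N}$, and collecting it with the divergence contribution yields the asserted formula.

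Since the computation is elementary, the genuine care lies in two places. First, the justification of the two integrations by parts: a priori the pressure associated with \eqref{eq22} is only known to live in $H^{-1}(0,T;L^2_0(\Omega))$, which is not enough to give meaning to $\langle\nabla p,\cdot\rangle_{L^2(Q)^N}$ or to the pointwise product rule, so I would establish the identity first for smooth data (for which $\phi\in C([0,T];V)\cap C^1([0,T];H)$ and $p$ are as regular as needed) and then pass to general regular solutions by density together with the continuity of the solution map in Theorem \ref{existenciasol}. Second, and this is where I expect the real effort, one must track the signs and the index contractions carefully through the two product-rule expansions, in particular identifying precisely which term is annihilated by $div\,\phi=0$ and which is absorbed into $div\,m$; a single slip there flips a sign in the final identity.
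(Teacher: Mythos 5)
Your route is exactly the paper's: a purely spatial double integration by parts in which \eqref{eq22} enters only through regularity, the boundary terms vanish because $\phi=0$ on $\Sigma$, the product-rule term containing $div \, \phi$ is killed by incompressibility, and the equality of mixed partials of $p$ lets you trade the two derivative directions. Your preliminary remarks on regularity (establish the identity for smooth data, then use the continuity of the solution map of Theorem \ref{existenciasol}) are also sensible and consistent with the paper's restriction to ``regular solutions''. So the method itself is not in question.

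The genuine problem is your last sentence. Track the signs as you yourself insist one must. The first integration by parts (in $x_j$) gives
\[
\langle \nabla p, m\cdot\nabla\phi\rangle_{L^2(Q)^N}
= -\langle \nabla p, \phi\,(div\,m)\rangle_{L^2(Q)^N}
- \int\!\!\!\int_Q \sum_{i,j} m_j\,(\p_j\p_i p)\,\phi^i\,dx\,dt,
\]
and note that your own description of the remainder already drops this minus sign. The second integration by parts (in $x_i$, using $div\,\phi=0$) turns the remainder into $+\langle\nabla p,\phi\cdot\nabla m\rangle_{L^2(Q)^N}$. Hence the computation you describe yields
\[
\langle \nabla p, m\cdot\nabla\phi\rangle_{L^2(Q)^N}
= \langle\nabla p,\phi\cdot\nabla m\rangle_{L^2(Q)^N}
- \langle\nabla p,\phi\,(div\,m)\rangle_{L^2(Q)^N},
\]
which is the \emph{negative} of the right-hand side asserted in the lemma; the claim that collecting terms ``yields the asserted formula'' is precisely the sign slip you warned against. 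In fairness, the discrepancy originates in the paper: its proof works with $X:=-\int\!\!\int_Q \p_i p\, m_k\,\p_k\phi^i\,dx\,dt$, correctly shows $X=-\langle\nabla p,\phi\cdot\nabla m\rangle+\langle\nabla p,\phi\,(div\,m)\rangle$, and then silently identifies $X$ with the lemma's left-hand side, which is $-X$. So the identity as stated is off by a global sign, and no correct computation can produce it; a clean write-up should either flip both signs on the right or state the identity for $-\langle\nabla p, m\cdot\nabla\phi\rangle$. The slip is harmless downstream, since in the proof of Lemma \ref{equivalencia-X} only the absolute value of this pressure term is estimated, but your proof cannot end by asserting the stated signs: it should end with the corrected identity and a remark on the discrepancy.
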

\begin{proof}
Let us set  
$$
X = - \int \! \! \! \int_{Q} \frac{\partial p}{\partial x_i} m_k\frac{\partial \phi^i}{\partial x_k}dxdt.
$$
Integrating by parts with respect to $x_k$, and using the fact that $ \phi = 0 $ on $\Sigma$, we get
$$
X = \int \! \! \! \int_{Q}\frac{\partial^2p}{\partial x_k\partial x_i}m_k\phi^idxdt +\int \! \! \! \int_{Q} \frac{\partial p}{\partial x_i}\frac{\partial m_k}{\partial x_k}\phi^idxdt.
$$
Next, we integrate  by parts again the first integral, this time with respect to $x_i$, and we obtain
$$
\int \! \! \! \int_{Q} \frac{\partial p}{\partial x_k}\frac{\partial}{\partial x_i}\biggl(m_k\phi^i\biggl)dxdt = -\int \! \! \! \int_{Q} \nabla p \phi \cdot  \nabla m dxdt.
$$  
Hence, we conclude  that
$$
X = - \int \! \! \! \int_{Q} \nabla p \phi \cdot  \nabla m dxdt +\int \! \! \! \int_{Q} \nabla p \phi (div \ m) dxdt,
$$
and the proof is finished.
\end{proof}

\begin{lemma}\label{equivalencia-X}
Assume  $\omega$ satisfies \eqref{hipm} and let $T > 2R_0$. Then there exists $C>0$ such that,  for every $(\phi^0,\phi^1) \in V \times H$,  the weak solution $\phi$ of (\ref{eq22}) satisfies: 
\be\label{firstobs}
||\phi^0||_V^2 + |\phi^1|^2_{H} \leq C\int \! \! \! \int_{\omega \times (0,T)}\bigl(|\phi_t|^2+|\phi|^2\bigl)dxdt.
\ee
 \end{lemma}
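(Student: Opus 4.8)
The plan is to establish \eqref{firstobs} by the classical multiplier (Rellich--Morawetz) method adapted to the incompressible setting, the one genuinely new point being the treatment of the pressure through Lemma \ref{obsX}. I work with regular solutions throughout (the general case $(\phi^0,\phi^1)\in V\times H$ following by density and the continuity in Theorem \ref{existenciasol}), and I use the conservation of energy $E(t):=\frac{1}{2}(|\phi_t(t)|^2_H+||\phi(t)||^2_V)=E(0)$, which is the energy identity of Theorem \ref{existenciasol} with $h=0$. Since $\Omega$ is star-shaped with respect to the origin, I take the radial field $m(x)=x$, for which $\nabla m$ is the identity, $div\,m=N$, $|m(x)|\le R_0$ on $\bar\Omega$, and $m\cdot\nu\ge\gamma>0$ on $\partial\Omega$.

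\emph{First step (boundary observability).} I multiply \eqref{hyperpress-1} by the combined multiplier $m\cdot\nabla\phi+\frac{N-1}{2}\phi$ and integrate over $Q$. Integrating by parts componentwise, exactly as for the scalar wave equation (the operators $\partial_{tt}$ and $-\Delta$ act componentwise and the only coupling is through $\nabla p$), produces the Rellich-type identity
\be
\frac{1}{2}\int_0^T\!\!\int_{\partial\Omega}(m\cdot\nu)|\partial_\nu\phi|^2\,d\sigma\,dt = \int_0^T E(t)\,dt + X_0^T + \mathcal{P},
\ee
where $X_0^T=\bigl[\int_\Omega\phi_t\cdot(m\cdot\nabla\phi+\tfrac{N-1}{2}\phi)\,dx\bigr]_0^T$ collects the time-boundary terms and $\mathcal{P}=\langle\nabla p,\,m\cdot\nabla\phi+\tfrac{N-1}{2}\phi\rangle_{L^2(Q)^N}$ is the pressure contribution. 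The decisive observation is that $\mathcal{P}=0$: by Lemma \ref{obsX} with $m=x$ one has $\langle\nabla p,m\cdot\nabla\phi\rangle=-\langle\nabla p,\phi\rangle+N\langle\nabla p,\phi\rangle=(N-1)\langle\nabla p,\phi\rangle$, while $\langle\nabla p,\phi\rangle_{L^2(Q)^N}=-\langle p,div\,\phi\rangle=0$ because $div\,\phi=0$ and $\phi=0$ on $\Sigma$. Hence the pressure disappears and the computation reduces to the pure-wave one.

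\emph{Second step (threshold $T>2R_0$ and localization).} Using $\int_0^TE=TE(0)$, the bound $|m\cdot\nabla\phi|\le R_0|\nabla\phi|$ and $2|\phi_t||\nabla\phi|\le|\phi_t|^2+|\nabla\phi|^2$, each endpoint of $X_0^T$ is controlled by $R_0E(0)$ up to the lower-order term $\frac{N-1}{2}\int_\Omega\phi_t\cdot\phi$; rearranging and using $m\cdot\nu\le R_0$ gives, for $T>2R_0$, the boundary observability $E(0)\le \frac{R_0}{2(T-2R_0)}\int_0^T\!\int_{\partial\Omega}|\partial_\nu\phi|^2\,d\sigma\,dt$ modulo lower-order terms. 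To convert this into an interior estimate on $\omega$, I use that by \eqref{hipm} $\omega=\Omega\cap\mathcal{O}$ with $\mathcal{O}$ a neighborhood of $\partial\Omega$, fix $h\in C^1(\bar\Omega)^N$ with $h=\nu$ on $\partial\Omega$ and $\mathrm{supp}\,h\subset\bar\omega$, and multiply \eqref{hyperpress-1} by $h\cdot\nabla\phi$; this bounds $\int_0^T\!\int_{\partial\Omega}|\partial_\nu\phi|^2$ by $C\int_{\omega\times(0,T)}(|\phi_t|^2+|\nabla\phi|^2)$ plus time-boundary and pressure terms, the latter reorganized again through Lemma \ref{obsX}. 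Finally, to replace $|\nabla\phi|^2$ by $|\phi|^2$, I take a cutoff $\zeta\in C_0^\infty(\mathcal{O})$ equal to $1$ on $\mathrm{supp}\,h$ and write $\int\zeta|\nabla\phi|^2=-\int\zeta\,\phi\cdot\Delta\phi-\int\phi\cdot(\nabla\zeta\cdot\nabla\phi)$; substituting $\Delta\phi=\phi_{tt}+\nabla p$ and integrating by parts in $t$ turns the first term into $|\phi_t|^2$ and $|\phi|^2$ integrals, the pressure collapsing by $div\,\phi=0$ to $-\int p(\nabla\zeta\cdot\phi)$, which is supported in $\omega$. Combining the steps and absorbing the lower-order terms yields \eqref{firstobs}.

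\emph{The main obstacle} is, at every stage, the pressure. Because \eqref{hyperpress} is not of Cauchy--Kowalewski type, $\nabla p$ cannot be eliminated by Holmgren-type unique continuation as one does for the wave equation; each occurrence must instead be handled algebraically, exploiting that $\phi$ is divergence-free and vanishes on $\Sigma$, which is precisely the content of Lemma \ref{obsX}. The heaviest technical point is the last absorption step: there the residual pressure terms $-\int p(\nabla\zeta\cdot\phi)$ survive, and one controls them by noting that taking the divergence of \eqref{hyperpress-1} together with $div\,\phi=0$ forces $\Delta p=0$ in $x$ for a.e.\ $t$, so $p$ is harmonic and admits interior estimates on $\omega$; choosing the geometry of $h$ and $\zeta$ compatible with \eqref{hipm} so that all localized terms genuinely live in $\omega$ is the part demanding the most care.
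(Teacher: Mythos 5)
Your route is the same as the paper's: boundary observability via the radial multiplier $m(x)=x$ (and indeed the pressure contribution vanishes exactly as you compute, since Lemma \ref{obsX} with $\nabla m = I$, $div\, m = N$ reduces it to $\langle \nabla p,\phi\rangle_{L^2(Q)^N}$, which is zero because $div\,\phi=0$ and $\phi=0$ on $\Sigma$ --- this is precisely Theorem \ref{lemma22} in the appendix); then localization to $\omega$ with a field equal to $\nu$ on $\partial\Omega$ and supported in $\bar\omega$; then a Caccioppoli-type cutoff step to replace $|\nabla\phi|^2$ by $|\phi|^2$. Up to that point your outline matches the paper's proof of Lemma \ref{equivalencia-X}.

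The gap is in how you treat the pressure terms that do \emph{not} vanish. In the localization step, Lemma \ref{obsX} leaves you with $\langle \nabla p, -\phi\cdot\nabla\theta + \phi\,(div\,\theta)\rangle$, and in the cutoff step you are left with $-\int\!\!\int p\,(\nabla\zeta\cdot\phi)\,dxdt$; both involve $p$ itself against functions supported in $\omega$, and nothing in the observation controls $p$ on $\omega$. Your proposed fix --- $\Delta p=0$ in $x$, hence ``interior estimates on $\omega$'' --- cannot close the argument, for two reasons. First, $\omega=\Omega\cap\mathcal{O}$ abuts $\partial\Omega$, so interior elliptic estimates (which require compact containment in $\Omega$) do not apply there. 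Second, and more fundamentally, interior estimates for harmonic functions bound derivatives of $p$ by norms of $p$ on larger sets; they never bound $p$ by data of $\phi$, which is what \eqref{firstobs} demands. What is actually needed, and what the paper does, is a three-part mechanism you never invoke: (i) the a priori estimate $\|p\|^2_{H^{-1}(0,T;L^2_0(\Omega))}\le C\bigl(\|\phi^0\|^2_V+|\phi^1|^2_H\bigr)$ (the Remark following Theorem \ref{existenciasol}); (ii) a time cutoff $\eta$ with $\eta(0)=\eta(T)=0$ built into the multipliers, so that the localized test functions (e.g.\ $\eta\,\nabla\zeta\cdot\phi$) lie in $H^1_0(0,T;L^2(\Omega))$ with norm controlled by $\int\!\!\int_{\omega\times(0,T)}(|\phi|^2+|\phi_t|^2)dxdt$, allowing the pressure pairing to be taken in the $H^{-1}$--$H^1_0$ duality in time; and (iii) Young's inequality splitting each pressure term as $\delta\|p\|^2_{H^{-1}(0,T;L^2)}+C_\delta\int\!\!\int_{\omega\times(0,T)}(|\phi|^2+|\phi_t|^2+|\nabla\phi|^2)dxdt$, after which $\delta\|p\|^2\le \delta C\,(\|\phi^0\|^2_V+|\phi^1|^2_H)$ is absorbed into the left-hand side for $\delta$ small. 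The time cutoff in (ii) is also needed for a second purpose you gloss over: without $\eta$ vanishing at $t=0,T$, the time-boundary terms of the localized multiplier identity (and of your integration by parts in $t$ in the cutoff step) are of size comparable to the full energy, with a constant that cannot be made small, so they cannot be absorbed. As written, your final step therefore does not close; replacing the harmonicity argument by (i)--(iii) repairs it and recovers the paper's proof.
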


\begin{proof}
Along the proof we use the following notation:
$$
E(t) = |\phi_t(t)|_H^2 + ||\phi(t)||_V^2, \ \ \forall t \in [0,T].
$$ 
Without loss of generality, we assume that $\phi$ is regular and work with the equivalent problem  \eqref{hyperpress-1}, this is the case if we  take, for instance,  $\phi^0 \in V\cap H^4(\Omega)$ and $\phi^1 \in V\cap H^2(\Omega)$. 

Using the change of variables $T\tau = (T-2\epsilon)t + T\epsilon$, which implies $\epsilon \leq \tau \leq T-\epsilon $, from the boundary observability  inequality given in  Theorem \ref{lemma22} in the appendix,  we have
$$
E(0) \leq C\int_{\epsilon}^{T-\epsilon} \! \!\!\! \int_{\partial \Omega} \biggl(   \frac{\partial \phi}{\partial \nu}\biggl)^2d\Sigma.
$$

Next, we consider a vector field $h \in C^2(\overline{\Omega})^N$ such that  $h = \nu $ on $\partial \Omega$ and $h = 0$ on $\Omega \setminus \omega$ and let $\eta \in C^2([0,T])$ be such that $\eta(0) = \eta(T) = 0$ and $ \eta(t) = 1 $ in $(\epsilon, T-\epsilon)$.  We define $\theta(x,t) = \eta(t)h(x)$, which belongs to $W^{2,\infty}(Q)$ and satisfies
\begin{equation*}
\left |   
\begin{array}{ll}
\theta(x,t) = \nu(x)   \ \  \mbox{for all}  \  \  (x,t) \in  (\epsilon,T-\epsilon) \times \partial \Omega,  \\
\theta(x,t)\cdot \nu(x)  \geq 0, \ \, \mbox{for all} \ \ (x,t) \in  (0,T) \times \partial \Omega, \\
\theta(x,0) = \theta(x,T) = 0 , \ \ \mbox{for all} \ \ x \in  \Omega, \\
\theta(x,t) = 0 \  \ \mbox{in} \ \ \bigl(\Omega \setminus \omega\bigl) \times (0,T).
\end{array}
\right. 
\end{equation*}

Then we consider the multiplier $\theta \cdot  \nabla \phi$ and, from Lemma \ref{lema21} in the appendix, we  obtain the following identity for all weak solution $\phi $ of (\ref{eq22}):
\begin{eqnarray}
\frac{1}{2}\int \! \! \! \int_{\Sigma}\theta_k(x,t) \nu_k(x)\biggl(\frac{\partial \phi}{\partial \nu}\biggl)^2d\Sigma &=&  (\phi_t(.), \theta(x,.)\cdot \nabla \phi(.))\bigl|_0^T +\int \! \! \! \int_{Q}\frac{\partial \theta_k}{\partial x_j}\frac{\partial \phi^i}{\partial x_k}\frac{\partial \phi^i}{\partial x_j}dxdt  \\
&& + \frac{1}{2} \int \! \! \! \int_{Q}\frac{\partial \theta_k}{\partial x_k} \bigl(  |\phi_t|^2 - |\nabla \phi|^2 \bigl)dxdt +\int \! \! \! \int_{Q}\frac{\partial p}{\partial x_i} \theta_k\frac{\partial \phi^i}{\partial x_k}dxdt \nonumber .
\end{eqnarray}

Using the definition of  $\theta$, we have
$$
   \frac{1}{2} \int_{\epsilon}^{T- \epsilon} \!\!\!\!\int_{\partial \Omega}\biggl(\frac{\partial \phi}{\partial \nu}\biggl)^2d\Sigma \leq \frac{1}{2}\int \! \! \! \int_{\Sigma}\theta_k(x,t) \nu_k(x)\biggl(\frac{\partial \phi}{\partial \nu}\biggl)^2d\Sigma,
$$
because $\theta(x,t) = \nu(x) $ on $\partial \Omega \times (\epsilon, T-\epsilon)$ and
$$
(\phi_t(.), \theta(x,.)\cdot \nabla \phi(.))\bigl|_0^T  = 0.
$$

 We also have
$$
\biggl| \int \! \! \! \int_{Q}\frac{\partial \theta_k}{\partial x_j}\frac{\partial \phi^i}{\partial x_k}\frac{\partial \phi^i}{\partial x_j}dxdt \biggl| \leq  C\int \! \! \! \int_{\omega \times (0,T)}|\nabla \phi|^2 dxdt,
$$
since $ \theta \in C^1(\overline{\Omega}\times (0,T))$.

For the pressure, we use Lemma \ref{obsX} to see that 
\begin{align}
\int \! \! \! \int_{Q}\frac{\partial p}{\partial x_i} \theta_k\frac{\partial \phi^i}{\partial x_k}dxdt  %= - \int \! \! \! \int_{Q} \nabla p \phi \cdot \nabla \theta  dxdt +\int \! \! \! \int_{Q} \nabla p \phi (div \ \theta) dxdt \nonumber \\
 = \left < \nabla p, -\phi\cdot  \nabla \theta +  \phi (div \ \theta)   \right >_{{H^{-1}(Q)}^N, {H^{1}_0(Q)^N}}. \nonumber
 % \leq C||\nabla q||_{L^2}||\nabla \phi||_{L^2(H)} \leq \delta||\nabla q ||_{L^2}^2 + \overline{C}||\nabla \phi ||_{L^2(H)}^2
\end{align}
Consequently 
\begin{align}
\left | \int \! \! \! \int_{Q}\frac{\partial p}{\partial x_i} \theta_k\frac{\partial \phi^i}{\partial x_k}dxdt \right|  \leq & \  C_\delta \int \! \! \! \int_{\omega \times (0,T)} (  |\phi|^2+ |\phi_t|^2 + |\nabla \phi|^2) dxdt \nonumber \\
&+ \delta \left\| \nabla p \right \|^2_{H^{-1}(Q)^N},
\end{align}
for any $\delta >0$.  Thus, 
$$
 \frac{1}{2}\int \! \! \! \int_{\Sigma}\theta_k(x,t) \nu_k(x)\biggl(\frac{\partial \phi}{\partial \nu}\biggl)^2d\Sigma  \leq  C \int \! \! \! \int_{\omega \times (0,T)}\bigl(|\phi|^2+ |\phi_t|^2 + |\nabla \phi|^2 \bigl)dxdt+\delta \left\| \nabla p \right \|^2_{H^{-1}(Q)^N}.
 $$
Using the fact that 
$$
\left\| \nabla p \right \|^2_{H^{-1}(Q)^N} \leq C E(0), 
$$
and  choosing $\delta$ small enough,  we conclude that 
\be\label{FGB}
E(0) \leq C \int_{\epsilon}^{T- \epsilon}\!\!\!\!\!\int_{\partial \Omega}\biggl(\frac{\partial \phi}{\partial \nu}\biggl)^2d\Sigma  \leq C \int \! \! \! \int_{\omega \times (0,T)}\bigl( |\phi_t|^2 + |\phi|^2 + |\nabla \phi|^2 \bigl)dxdt.
\ee
Hence, by change of variables, we have  that 
 \begin{equation}\label{id21}
E(0)   \leq C \int_{\epsilon}^{T-\epsilon}\!\!\!\!\int_{{\omega}}\bigl( |\phi|^2+ |\phi_t|^2 + |\nabla \phi|^2 \bigl)dxdt.
\end{equation}

Now, let $\omega_0$ be a neighborhood of $\partial \Omega$ such that $\Omega\cap\omega_0 \subset \omega$.  We observe that  inequality (\ref{id21}) is true for each neighborhood of $\partial \Omega$, and in particular for $\omega_0$, that is to say
$$
E(0)   \leq C \int_{\epsilon}^{T-\epsilon}\!\!\!\!\!\int_{{\omega_0}}\bigl(  |\phi|^2 + |\phi_t|^2 + |\nabla \phi|^2 \bigl)dxdt.
$$

Now, we consider $\rho \in W^{1,\infty}(\Omega)$, $\rho \geq 0$, such that
$$
\rho = 1 \ \ \mbox{in} \ \  \omega_0, \ \ \mbox{and} \ \ \rho = 0 \ \ \mbox{in} \ \ \Omega \setminus \omega. 
$$

Defining $h = h(x,t)$ by $h(x,t) = \eta(t)\rho^2(x)$, where $\eta$ is defined above, it follows that

\begin{equation*}
\left |   
\begin{array}{ll}
h(x,t) = 1   \ \  \mbox{for all}  \  \  (x,t) \in \omega_0 \times (\epsilon,T-\epsilon),  \\
h(x,t)  = 0, \ \, \mbox{for all} \ \ (x,t) \in \bigl(\Omega \setminus \omega\bigl) \times (0,T), \\
h(x,0) = h(x,T) = 0 , \ \ \mbox{for all} \ \ x \in  \Omega, \\
\frac{|\nabla h|}{h} \in L^{\infty}(Q).
\end{array}
\right. 
\end{equation*}

Multiplying both sides of $ \eqref{hyperpress-1}_1$ by $h\phi$ and integrating by parts in $Q$, we obtain 
$$
\int \! \! \! \int_{Q} h \phi\cdot \phi_{tt}dxdt -\int \! \! \! \int_{Q} h\phi \cdot \Delta \phi dxdt + \int \! \! \! \int_{Q} h \nabla p \cdot \phi dxdt = 0.
$$ 
We have
\be\label{VFBG}
\int \! \! \! \int_{Q} h \phi_{tt} \cdot \phi dxdt = -\int \! \! \! \int_{Q} h|\phi_t|^2 dx dt -\int \! \! \! \int_{Q} h_t\phi \cdot \phi_t dxdt.
\ee
For the second term in the right hand side of \eqref{VFBG}, since  $\phi = 0$ on $\Sigma$, we have 
$$
-\int \! \! \! \int_{Q} h\Delta \phi \cdot \phi dxdt   =\int \! \! \! \int_{Q} h |\nabla \phi|^2 dxdt + \int \! \! \! \int_{Q} \phi\cdot\bigl(\nabla \phi \cdot \nabla h \bigl)dxdt.
$$
Consequently,
$$
\int \! \! \! \int_{Q} h|\nabla \phi |^2dxdt =  \int \! \! \! \int_{Q} h|\phi_t|^2 dxdt+\int \! \! \! \int_{Q}h_t\phi\cdot\phi_t dxdt -\int \! \! \! \int_{Q}  \phi\cdot\bigl(\nabla \phi \cdot \nabla h \bigl)dxdt  - \int \! \! \! \int_{Q} h \nabla p \cdot \phi dxdt.
$$
It is immediate that
$$
\biggl| \int \! \! \! \int_{Q} \phi \cdot \bigl( \nabla \phi \cdot \nabla h \bigl)dxdt \biggl| \leq \frac{1}{2}\int \! \! \! \int_{Q} h|\nabla \phi|^2 dxdt + \frac{1}{2} \int \! \! \! \int_{Q} \frac{|\nabla h|^2}{h}|\phi|^2 dxdt.
$$
Hence
$$
\int \! \! \! \int_{Q} h|\nabla \phi |^2dxdt \leq C \int \! \! \! \int_{\omega \times (0,T)}\bigl(|\phi_t|^2+|\phi|^2\bigl)dxdt+
2 \biggl| \int \! \! \! \int_{Q} h \nabla p \cdot \phi dxdt\biggl|. 
$$
Next, observing that 
\begin{align}
  \int \! \! \! \int_{Q} h \nabla p \cdot \phi dxdt  %&=  \left <  p,   \phi \cdot \nabla h   \right >_{H^{-1}(0,T; L^2(\Omega)^N),H^{1}_0(0,T; L^2(\Omega)^N)}   \nonumber \\
   \leq \delta || p||^2_{H^{-1}(0,T; L^2(\Omega)^N)}+  C_\delta || h \phi ||^2_{H^{1}_0(0,T; L^2(\Omega)^N)}, \nonumber 
 \end{align}
for any $\delta >0$, we conclude that  
$$
\int_{\epsilon}^{T-\epsilon}\!\!\!\!\!\int_{\omega_0}|\nabla \phi|^2dxdt \leq C\int \! \! \! \int_{\omega \times (0,T)}\bigl(|\phi_t|^2+|\phi|^2\bigl)dxdt +\delta || p||^2_{H^{-1}(0,T; L^2(\Omega)^N)}.
$$
From this last estimate we infer that
$$
E(0) \leq C\int \! \! \! \int_{\omega \times (0,T)}\bigl(|\phi_t|^2+|\phi|^2\bigl)dxdt +\delta || p||^2_{H^{-1}(0,T; L^2(\Omega)^N)}.
$$
Finally, taking  $\delta$ small enough, we obtain
\begin{align}\label{almost}
E(0) \leq C\int \! \! \! \int_{\omega \times (0,T)}\bigl(|\phi_t|^2+|\phi|^2\bigl)dxdt,
\end{align}
which is exactly \eqref{firstobs}.
\end{proof}

%%%%%%%%%%%%%%%%%%%%%%%%%%%%%%%%%%%%%%%%%%%%%%%%%%%%%%%%%%%%%%%%%%%%%%%%%%%%%%%%%%%%%%%%%%%%%%%%%%%
%%%%%%%%%%%%%%%%%%%%%%%%%%%%%%%%%%%%%%%%%%%%%%%%%%%%%%%%%%%%%%%%%%%%%%%%%%%%%%%%%%%%%%%%%%%%%%%%%%%
%%%%%%%%%%%%%%%%%%%%%%%%%%%%%%%%%%%%%%%%%%%%%%%%%%%%%%%%%%%%%%%%%%%%%%%%%%%%%%%%%%%%%%%%%%%%%%%%%%%%%

\begin{proposition}\label{equivalencia-1}
Assume  $\omega$ satisfies \eqref{hipm}. Then there exist $T_0 > 0$ and a constant $C > 0 $ such that,  for any  $T >  T_0$ and any $(\phi^0,\phi^1) \in V \times H$,  the solution $\phi$ of (\ref{eq22}) satisfies  \eqref{desobs2}. 
 \end{proposition}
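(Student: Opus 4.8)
The plan is to remove the lower-order term $\int\!\!\int_{\omega\times(0,T)}|\phi|^2$ from the observability inequality \eqref{firstobs} of Lemma \ref{equivalencia-X} by a classical compactness--uniqueness argument, at the price of enlarging the observability time. I would take $T_0\ge 2R_0$ (so that Lemma \ref{equivalencia-X} is available) and also large enough for the unique continuation step below, and fix $T>T_0$. Arguing by contradiction, suppose no constant works in \eqref{desobs2}. Then there is a sequence $(\phi_n^0,\phi_n^1)\in V\times H$ whose solutions $\phi_n$ of \eqref{eq22} satisfy $||\phi_n^0||_V^2+|\phi_n^1|_H^2=1$ while $\int\!\!\int_{\omega\times(0,T)}|\partial_t\phi_n|^2\,dx\,dt\to 0$. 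Since \eqref{eq22} is conservative (apply the energy identity of Theorem \ref{existenciasol} with $h=0$), the energy is constant equal to $1$, so $\{\phi_n\}$ is bounded in $L^\infty(0,T;V)$ and $\{\partial_t\phi_n\}$ in $L^\infty(0,T;H)$.

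First I would extract the limit. Because $V$ embeds compactly into $H$, the Aubin--Lions lemma yields, along a subsequence, $\phi_n\to\phi$ strongly in $C([0,T];H)$, with $\phi_n\rightharpoonup\phi$ weakly-$*$ in $L^\infty(0,T;V)$ and $\partial_t\phi_n\rightharpoonup\partial_t\phi$ weakly-$*$ in $L^\infty(0,T;H)$; passing to the limit in the weak formulation shows that $\phi$ again solves \eqref{eq22}. Weak lower semicontinuity together with $\int\!\!\int_{\omega\times(0,T)}|\partial_t\phi_n|^2\to 0$ forces $\partial_t\phi=0$ on $\omega\times(0,T)$. On the other hand, inserting the contradiction hypothesis into \eqref{firstobs} gives $\int\!\!\int_{\omega\times(0,T)}|\phi_n|^2\ge 1/C-o(1)$, so by the strong $L^2(\omega\times(0,T))$ convergence the limit satisfies $\int\!\!\int_{\omega\times(0,T)}|\phi|^2\ge 1/C>0$, whence $\phi\neq0$. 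It therefore suffices to prove that any solution $\phi$ of \eqref{eq22} with $\partial_t\phi=0$ on $\omega\times(0,T)$ must vanish, which will contradict $\phi\neq0$ and close the argument.

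The heart of the matter, and the step I expect to be the main obstacle, is this unique continuation statement, precisely because, as noted in the introduction, the pressure prevents a direct use of Holmgren's theorem. My plan is to eliminate the pressure by differentiation and a curl. Set $w=\partial_t\phi$; it solves the same system \eqref{hyperpress-1} with pressure $\partial_t p$ and vanishes identically on the open set $\omega\times(0,T)$. Applying the antisymmetrized gradient $(\operatorname{curl}w)_{ij}=\partial_i w_j-\partial_j w_i$ kills the term $\nabla(\partial_t p)$ since $\partial_i\partial_j-\partial_j\partial_i=0$, so each component of $\operatorname{curl}w$ solves the scalar wave equation and still vanishes on $\omega\times(0,T)$. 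Here I would invoke the classical unique continuation for the wave equation from a neighborhood $\omega$ of $\partial\Omega$: for $T>T_0$ large enough this gives $\operatorname{curl}w\equiv0$ in $Q$. Combined with $\operatorname{div}w=0$ this makes each component of $w$ harmonic in the space variable, because $\Delta w_j=\partial_j(\operatorname{div}w)=0$, and since $w=0$ on $\Sigma$ we conclude $w\equiv0$, i.e. $\partial_t\phi\equiv0$. Finally, $\phi$ is then time-independent and $\phi_{tt}=0$ reduces \eqref{hyperpress-1} to the stationary Stokes problem $-\Delta\phi^0+\nabla p=0$, $\operatorname{div}\phi^0=0$, $\phi^0|_{\partial\Omega}=0$, whose only solution is $\phi^0=0$; equivalently $A\phi^0=0$ with $A\colon V\to V'$ an isomorphism. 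Hence $\phi\equiv0$, the desired contradiction, and $T_0$ is fixed as the larger of $2R_0$ and the unique continuation time.
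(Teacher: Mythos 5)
Your proposal is correct and follows essentially the same route as the paper: a compactness--uniqueness contradiction argument (normalization, Aubin--Lions, passing to a limit solution with $\phi_t=0$ on $\omega\times(0,T)$), followed by unique continuation for the time-differentiated solution obtained by applying the curl to eliminate the pressure and invoking Holmgren's theorem for the resulting wave equation, which fixes $T_0$. The only (immaterial) difference is the final elliptic step: the paper writes the curl-free, divergence-free field as $\nabla \Phi$ with $\Phi$ harmonic and uses unique continuation for the Laplace equation, whereas you observe directly that $\Delta w_j=\partial_j(\operatorname{div} w)=0$ and conclude from the vanishing of $w$ on (indeed, near) the boundary --- both yield $\phi^0=\phi^1=0$ and the same contradiction.
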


\begin{proof}[Proof of Proposition \ref{equivalencia-1} ]
Let us suppose that (\ref{desobs2}) is not true. Then, given a natural number $n$, there exists an initial data $(\tilde{\phi^0_n},\tilde{\phi^1_n})$ such that  $\tilde{\phi_n}$, the solution  of  (\ref{eq22}) corresponding to this initial data,  satisfies
$$
||\tilde{\phi^0_n}||^2_{V}+ |\tilde{\phi^1_n}|^2_H \geq n ||\tilde{\phi}_{n,t}||_{L^2(\omega \times (0,T))}.
$$
Without loss of generality, we assume  that $(\tilde{\phi^0_n},\tilde{\phi^1_n})$ is smooth and  set
$$
K = \biggl(   ||\tilde{\phi^0_n}||^2_{V}+ |\tilde{\phi^1_n}|^2_H \biggl)^{1/2}
$$
and
$$
\phi^0_n = \frac{\tilde{\phi^0_n}}{K}, \ \   \phi^1_n = \frac{\tilde{\phi^1_n}}{K}, \ \ \phi_n = \frac{\tilde{\phi_n}}{K}.
$$

We have
\begin{align}\label{36}
||\phi_{n,t}||^2_{L^2(\omega \times (0,T))} \leq \frac{1}{n}
\end{align}
and
\be\label{bi}
||\phi^0_n||^2_{V}+ |\phi^1_n|^2_H = 1.
\ee

From \eqref{36},  there exist subsequences, denoted by the same index,  such that
\begin{equation}\label{123}
\liminf_{n \longrightarrow \infty}\int \! \! \! \int_{\omega \times (0,T)}  |\phi_{n,t}|^2dxdt = 0,
\end{equation}
\be
\phi_n^0 \rightharpoonup \phi^0 \ \ \mbox{in} \ \ V
\ee
and 
\be
\phi_n^1 \rightharpoonup \phi^1 \ \ \mbox{in} \ \ H.
\ee

Since $\phi_n$ is the solution of (\ref{eq22}) associated to the initial data $(\phi_n^0,\phi_n^1)$,  we have:
\begin{equation}\label{38}
\left |   
\begin{array}{ll}
\phi_n \ \ \mbox{is bounded in} \ \ L^{\infty}(0,T; V), \\
\phi_{n,t} \ \ \mbox{is bounded in} \ \ L^{\infty}(0,T; H).
\end{array}
\right.
\end{equation}
Therefore, there exists a subsequence $\{\phi_n\}$ such that
\begin{equation}\label{38}
\left |   
\begin{array}{ll}
\phi_n \longrightarrow \phi \ \ \mbox{weak star in} \ \ L^{\infty}(0,T; V), \\
\phi_{n,t} \longrightarrow \phi_t  \ \ \mbox{weak star in} \ \ L^{\infty}(0,T; H).
\end{array}
\right.
\end{equation}

From \eqref{38}, it is not difficult to show that  $\phi$ is the weak solution of (\ref{eq22}) corresponding to the  initial data $(\phi^0,\phi^1)$.

Next, since $V \hookrightarrow H$ compactly,  estimate (\ref{38}) and the Aubin-Lions compactness theorem give 
\begin{equation} \label{strongconv}
\phi_n \longrightarrow \phi \ \mbox{in } \ \ L^2(0,T; H).
\end{equation}

Hence, it follows from \eqref{123} and \eqref{38} that 
\be\label{hh}
\phi_t \equiv 0 \ \ \mbox{in} \ \ \omega \times (0,T)
\ee
and $\phi$ is independent of $t$ in $\omega$.

% \begin{equation}\label{eq44}
%\left |   
%\begin{array}{ll}
%\phi''  - \Delta \phi + \nabla q = 0,  \ \  \mbox{in}  \  \  Q,  \\
%div \  \phi  = 0 \ \, \mbox{in} \ \ Q, \\
%\phi = 0 \ \ \mbox{on} \ \ \Sigma, \\
%\phi(0) = \phi^0 \in V, \ \ \phi'(0) = \phi^1 \in H, 
%\end{array}
%\right. 
%\end{equation} 
%

Let us now  consider  the system

 \begin{equation}\label{eq45}
\left |   
\begin{array}{ll}
\xi_{tt}  = A\xi,  \\
\xi(0) = \phi^1,  \xi_t(0) = A \phi^0.
\end{array}
\right. 
\end{equation} 

Taking $ \psi(x,t) = \phi^0(x) +\int_0^t\xi(x,s)ds$,  it is not difficult to see that  $\psi$ solves (\ref{eq22}), with $( \phi^0, \phi^1)$ as initial data. Therefore,  from the uniqueness  of solutions of (\ref{eq22}), we have that  $\psi \equiv \phi$ and thanks to \eqref{hh} we have that $\xi \equiv 0$ in $\omega \times (0,T)$.

Let us now show that $\xi \equiv 0$. Indeed, applying the $curl$ operator in (\ref{eq45}), we see that $v = curl \ \xi$ satisfies

\begin{equation}\label{eq46}
\left |   
\begin{array}{ll}
v_{tt}  - \Delta v = 0 &  \mbox{in}   \  Q,  \\
v \equiv 0 & \mbox{in}  \ \omega \times (0,T).
\end{array}
\right. 
\end{equation} 

Then, by Holmgren's Uniqueness Theorem (see \cite{Lions-livro}),  there exists $T_0> 0$ such that if $T > T_0$ then  $ v \equiv  0 $. Therefore, there exists a scalar function $\Phi = \Phi(x,t)$ such that
$$
\xi = \nabla \Phi \ \ \mbox{in} \ \ Q.
$$

In view of $(\ref{eq45})_{2}$,  we have
$$
\Delta \Phi = 0 \ \ \mbox{in} \ \ Q.
$$

Since $\xi = 0$ in $\omega \times (0,T)$, we also have that
$$
\Phi = f(t) \ \ \mbox{in} \ \ \omega \times (0,T).
$$
From the unique continuation for the Laplace equation, we deduce that
$$
\Phi = f(t) \ \ \mbox{in} \ \ Q,
$$
which implies 
\be\label{ci}
 \xi = \nabla \Phi = 0 \ \ \mbox{in} \ \ Q.
 \ee
Hence,
 \be\label{pi}
 \phi^1= \phi^0= 0.
\ee
From \eqref{firstobs}, \eqref{strongconv}  and \eqref{pi},  we get a contradiction, and the proof is finished.
\end{proof}

As a consequence of Lemmas \ref{equivalencia} and  \ref{equivalencia-X},  and Proposition \ref{equivalencia-1}, we have the following result. 

\begin{theorem}\label{controlwavepre}
Assume  $\omega$ satisfies \eqref{hipm}. Then there exist $T_0 > 0$ and a constant $C > 0 $ such that  for any  $T >  T_0$ and any $(\phi^0,\phi^1) \in H \times V'$,  the solution $\phi$ of (\ref{eq22}) satisfies  \eqref{desobs}. 
\end{theorem}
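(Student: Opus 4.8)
The plan is to obtain the observability inequality \eqref{desobs} purely by assembling the three preliminary results, which together establish observability at the regular level $V\times H$ and then transport it by duality down to the energy level $H\times V'$ at which \eqref{desobs} is stated. The first and substantive ingredient is Proposition \ref{equivalencia-1}: it furnishes a threshold $T_0>0$ and a constant $C>0$ so that, for every $T>T_0$ and every $(\phi^0,\phi^1)\in V\times H$, the solution $\phi$ of \eqref{eq22} satisfies \eqref{desobs2}, i.e. $\|\phi^0\|_V^2+|\phi^1|_H^2$ is controlled by the observation of $\phi_t$ on $\omega\times(0,T)$. I would recall that the engine inside that proposition is Lemma \ref{equivalencia-X}, which already yields the observability \eqref{firstobs} but burdened with an additional lower-order observation of $\phi$; the compactness--uniqueness argument there (Holmgren together with unique continuation for the harmonic potential) is precisely what deletes that term and upgrades \eqref{firstobs} to the clean bound \eqref{desobs2}.

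With \eqref{desobs2} in hand for \emph{all} $(\phi^0,\phi^1)\in V\times H$, I would then simply note that this is exactly the hypothesis of Lemma \ref{equivalencia}, and invoke that lemma to descend to the class $H\times V'$. Its content is the transmutation $\phi=\psi_t$, where $\psi$ solves \eqref{psi-trans} with data $(A^{-1}\phi^1,\phi^0)\in V\times H$: applying the already-established \eqref{desobs2} to $\psi$ turns its left-hand side into $\|A^{-1}\phi^1\|_V^2+|\phi^0|_H^2$ and its right-hand side into the observation of $\psi_t=\phi$, after which one concludes using that $A\colon V\to V'$ is an isomorphism, so that $\|A^{-1}\phi^1\|_V$ and $\|\phi^1\|_{V'}$ are equivalent. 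This yields \eqref{desobs} for every $(\phi^0,\phi^1)\in H\times V'$, with the same $T_0$ and, up to a harmless constant, the same $C$.

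Since no genuinely new analysis is needed at this last stage, I do not expect a real obstacle here: the substantive difficulties — the multiplier identity carrying the pressure term in Lemma \ref{equivalencia-X}, and the unique-continuation step in Proposition \ref{equivalencia-1} — have already been discharged. The only points demanding care are bookkeeping ones: verifying that the threshold time $T_0$ and the observability constant are carried unchanged through Lemma \ref{equivalencia}, and checking that the notion of solution for data merely in $H\times V'$ set up in Remark \ref{defultra} is the one used throughout. Once these are confirmed, \eqref{desobs} holds as claimed, which — by the duality recalled just before \eqref{desobs} — is equivalent to the null controllability of \eqref{hyperpress} asserted in Theorem \ref{nullw}.
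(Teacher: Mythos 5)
Your proposal is correct and follows the paper's own route exactly: the paper states Theorem \ref{controlwavepre} precisely as the combination of Proposition \ref{equivalencia-1} (which gives \eqref{desobs2} on $V\times H$ for $T>T_0$, via compactness--uniqueness built on Lemma \ref{equivalencia-X}) and Lemma \ref{equivalencia} (the descent to $H\times V'$ through $\phi=\psi_t$ and the isomorphism $A\colon V\to V'$). Nothing is missing.
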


We end this section proving  Theorem \ref{nullw}.

\begin{proof}[Proof of Theorem \ref{nullw}] 

We consider the functional
\begin{equation}\label{funcionalJ}
\mathcal{J} : H \times V' \longrightarrow \mathbb{R}
\end{equation}
given by 
\begin{equation*}
\mathcal{J}(\phi^0, \phi^1) = \frac{1}{2}\int \! \! \! \int_{\omega \times (0,T)} |\phi|^2dxdt + <\varphi^1,u^0>_{V,V'} - (\phi^0, u^1)_H, 
\end{equation*}
where $\varphi$ is the solution of (\ref{eq22}) corresponding to the initial data $(\phi^0,\phi^1)$.

Using the observability inequality \eqref{desobs} and energy estimates, we can show that the functional  $\mathcal{J}$ is continuous, strictly convex and  coercive. Therefore,   $\mathcal{J}$ has a unique minimizer $(\hat{\phi}^0,\hat{\phi}^1)$. Using the Euler-Lagrange equation of $ \mathcal{J}$,  we conclude that $\hat{\phi}$,  solution of (\ref{eq22})  associated to $(\hat{\phi}^0,\hat{\phi}^1)$,  is a control which drives $u$ to zero at time $T$.
Inequality \eqref{costw} then  follows from the observability inequality \eqref{desobs} and the fact that $\mathcal{J}(\hat{\phi}^0,\hat{\phi}^1) \leq 0$. This finishes the proof of Theorem \ref{nullw}.

\end{proof}

\begin{remark}
The minimal time $T_0$ in Proposition \ref{equivalencia-1}  and Theorems  \ref{nullw} and  \ref{controlwavepre} must  satisfy $T_0 > 2R_0$  and be  such that  Holgrem's Theorem can be applied to conclude that the solution of \eqref{eq46} is zero (see \cite{Lions-livro}).
\end{remark}

\appendix

\section*{Acknowledgements}
The author thanks D. A. Souza,  J.-P. Puel  and   E. Zuazua  for valuable discussions and comments  related to this paper. This work was partially supported by the Grant  BFI-2011-424 of the Basque Government and partially supported by the Grant  MTM2011-29306-C02-00 of the MICINN, Spain, the ERC Advanced Grant FP7-246775 NUMERIWAVES, ESF Research Networking Programme OPTPDE and the Grant PI2010-04 of the Basque Government.

\section{Boundary observability  for the hyperbolic system}\label{ap}

This section is devoted  to prove the following result.

\begin{theorem}\label{lemma22}
If we take  $T > 2R_0$ then, for every solution of (\ref{eq22}) with initial data  $(\phi^0,\phi^1) \in V \times H$, the following estimate holds:
\begin{equation}\label{obsfronteira}
|\phi^1|_H^2 + ||\phi^0||_V^2 \leq \frac{R_0}{2(T-2R_0)} \int\!\!\!\!\int_{\Sigma}\biggl(   \frac{\partial \phi}{\partial \nu}\biggl)^2d\Sigma.
\end{equation}
\end{theorem}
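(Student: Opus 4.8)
The plan is to prove \eqref{obsfronteira} by the Rellich--Pohozaev multiplier method, adapted to the divergence constraint so that the pressure drops out. By the density of smooth data and the continuity statement of Theorem \ref{existenciasol}, it suffices to argue for regular solutions, so I work with the fluid formulation \eqref{hyperpress-1}. Since $\Omega$ is star-shaped with respect to the origin I choose the radial field $m(x)=x$ and test $\phi_{tt}-\Delta\phi+\nabla p=0$ against the multiplier $m\cdot\nabla\phi+\tfrac{N-1}{2}\phi$, integrating over $Q$. Throughout I write $E(t)=|\phi_t(t)|_H^2+\|\phi(t)\|_V^2$, which is conserved by the energy identity of Theorem \ref{existenciasol} with $h=0$.

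First I would organise the three resulting groups of terms. Integrating the time term by parts in $t$ and using $\phi_t=0$ on $\Sigma$ produces the endpoint quantity $\mathcal{X}(t):=(\phi_t,\,m\cdot\nabla\phi+\tfrac{N-1}{2}\phi)_H$, evaluated at $0$ and $T$, together with the interior integral $\tfrac{N}{2}\ii_Q|\phi_t|^2\,dxdt-\tfrac{N-1}{2}\ii_Q|\phi_t|^2\,dxdt$ (recall $\mathrm{div}\,m=N$). The Laplacian term, through the classical Rellich identity and the fact that $\nabla\phi=(\partial_\nu\phi)\nu$ on $\Sigma$, yields the interior integrals $(1-\tfrac{N}{2})\ii_Q|\nabla\phi|^2\,dxdt+\tfrac{N-1}{2}\ii_Q|\nabla\phi|^2\,dxdt$ and, decisively, the boundary term $\tfrac12\ii_\Sigma(m\cdot\nu)(\partial_\nu\phi)^2\,d\Sigma$.

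The hardest point, and the only genuine departure from the scalar wave equation, is the pressure. Here Lemma \ref{obsX} is tailored to the situation: for $m(x)=x$ one has $\nabla m=\mathrm{Id}$ and $\mathrm{div}\,m=N$, so the pressure contribution collapses to $(N-1)\langle\nabla p,\phi\rangle_{L^2(Q)^N}$, and this vanishes because $\mathrm{div}\,\phi=0$ and $\phi=0$ on $\Sigma$ (one integration by parts in space). The energy multiplier $\tfrac{N-1}{2}\phi$ produces the same inner product $\langle\nabla p,\phi\rangle=0$. Thus, for the radial multiplier, the pressure disappears entirely, which is what makes the boundary observability \eqref{obsfronteira} accessible by the multiplier method.

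The constant $\tfrac{N-1}{2}$ is chosen exactly so that the interior coefficients of $|\phi_t|^2$ and $|\nabla\phi|^2$ both reduce to $\tfrac12$; the interior contribution is then $\tfrac12\int_0^TE(t)\,dt=\tfrac{T}{2}E(0)$ by conservation of energy. Collecting everything gives the identity
\be
\tfrac{T}{2}E(0)+\mathcal{X}(T)-\mathcal{X}(0)=\tfrac12\ii_\Sigma(m\cdot\nu)(\partial_\nu\phi)^2\,d\Sigma .
\ee
To close the estimate I would bound the endpoint terms. Expanding the square and integrating the cross term by parts (again using $\phi=0$ on $\partial\Omega$ and $\mathrm{div}\,m=N$) yields $|m\cdot\nabla\phi+\tfrac{N-1}{2}\phi|_H^2=\int_\Omega|m\cdot\nabla\phi|^2\,dx-\tfrac{N^2-1}{4}\int_\Omega|\phi|^2\,dx\le R_0^2\|\phi\|_V^2$, whence $|\mathcal{X}(t)|\le R_0|\phi_t|_H\|\phi\|_V\le\tfrac{R_0}{2}E(t)=\tfrac{R_0}{2}E(0)$. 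Using $m\cdot\nu=x\cdot\nu\le R_0$ on $\Sigma$ and inserting these bounds gives $(\tfrac{T}{2}-R_0)E(0)\le\tfrac{R_0}{2}\ii_\Sigma(\partial_\nu\phi)^2\,d\Sigma$; rearranging under the assumption $T>2R_0$ produces a boundary observability estimate of the form \eqref{obsfronteira}. The main obstacle is thus entirely concentrated in the pressure term, and it is resolved by the radial choice of the multiplier together with the incompressibility and the homogeneous Dirichlet condition.
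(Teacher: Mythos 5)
Your proof follows essentially the same route as the paper's: the multiplier $m\cdot\nabla\phi+\tfrac{N-1}{2}\phi$ with $m(x)=x$, cancellation of the pressure via Lemma \ref{obsX} (using $\nabla m=\mathrm{Id}$, $\mathrm{div}\,m=N$, $\mathrm{div}\,\phi=0$ and $\phi=0$ on $\Sigma$), conservation of energy for the interior terms, and the bound $|m\cdot\nabla\phi+\tfrac{N-1}{2}\phi|_H\le R_0\|\phi\|_V$ for the endpoint terms; the paper merely splits the computation into Lemma \ref{lema21} (multiplier $\overline{q}\cdot\nabla\phi$ with $\overline{q}=x$) plus the equipartition identity obtained by multiplying the equation by $\phi$, which is equivalent to your one-shot multiplier. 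The one discrepancy is the constant: your (correct) identity carries the interior term $\tfrac{T}{2}E(0)$ and the endpoint bound $R_0E(0)$, so rearranging gives $E(0)\le\frac{R_0}{T-2R_0}\int\!\!\!\int_\Sigma\bigl(\partial\phi/\partial\nu\bigr)^2d\Sigma$, i.e.\ a constant twice the one claimed in \eqref{obsfronteira}. This is not a gap in your argument but rather a slip in the paper: there the interior term is written as $TE(0)$ instead of $\tfrac{T}{2}E(0)$ (partly compensated by the loose endpoint bound $2R_0E(0)$ stated there), and it is precisely this that produces $\frac{R_0}{2(T-2R_0)}$; a correct execution of the multiplier computation yields your constant, which is also consistent with the classical scalar wave-equation result, where the energy is defined with a factor $\tfrac12$. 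The factor of $2$ is harmless for the way Theorem \ref{lemma22} is used (Lemma \ref{equivalencia-X} and Proposition \ref{equivalencia-1} only need some finite observability constant for $T>2R_0$), but if the explicit constant matters, the statement should read $\frac{R_0}{T-2R_0}$.
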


For the  proof of  Theorem \ref{lemma22}, we need the following two lemmas.

\begin{lemma}\label{lema21}
Let $\overline{q} = \overline{q}(x) $ be in $ C^1(\bar{\Omega})^N$, then, for every regular solution  $u$  of (\ref{eq21}), the following identity holds:

\begin{eqnarray}
\frac{1}{2}\int \! \! \! \int_{\Sigma}\overline{q}_k(x) \nu_k(x)\biggl(\frac{\partial u}{\partial \nu}\biggl)^2d\Sigma &=&  (u_t(t), \overline{q}(x)\nabla u(t))\bigl|_0^T +\int \! \! \! \int_{Q}\frac{\partial \overline{q}_k}{\partial x_j}\frac{\partial u^i}{\partial x_k}\frac{\partial u^i}{\partial x_j}dxdt \nonumber  \\
&& + \frac{1}{2}  \int \! \! \! \int_{Q}\frac{\partial \overline{q}_k}{\partial x_k} \bigl(  |u_t|^2 - |\nabla u|^2 \bigl)dxdt  \nonumber \\
&& +\int \! \! \! \int_{Q}\frac{\partial p}{\partial x_i} \overline{q}_k\frac{\partial u^i}{\partial x_k}dxdt  +\int \! \! \! \int_{Q} h^i \overline{q}_k\frac{\partial u^i}{\partial x_k}dxdt.
\end{eqnarray}

\end{lemma}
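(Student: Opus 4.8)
The plan is to establish the identity by the classical multiplier method, taking $\overline{q}\cdot\nabla u$ as the multiplier and proceeding exactly as for the scalar wave equation, while keeping the pressure gradient and the forcing term untouched. Writing the $i$-th component of the system in (\ref{eq21}) as $u^i_{tt}-\Delta u^i+\partial p/\partial x_i=h^i$ (with summation over repeated indices and $h$ supported in $\omega$), I would multiply by $\overline{q}_k\,\partial u^i/\partial x_k$, sum over $i$, and integrate over $Q$. This yields four contributions, coming from the second time derivative, the Laplacian, the pressure gradient and the source; the last two are carried along and furnish the final two terms of the claimed identity, while the first two must be reorganized by integration by parts.

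First I would treat $\ii_Q u^i_{tt}\,\overline{q}_k\,\partial u^i/\partial x_k\,dxdt$. Integrating by parts once in $t$ produces the endpoint term $\bigl(u_t,\overline{q}\cdot\nabla u\bigr)\big|_0^T$ together with $-\ii_Q u^i_t\,\overline{q}_k\,\partial u^i_t/\partial x_k\,dxdt=-\tfrac12\ii_Q \overline{q}_k\,\partial(|u_t|^2)/\partial x_k\,dxdt$. A further integration by parts in $x_k$ turns the latter into $\tfrac12\ii_Q (\operatorname{div}\overline{q})\,|u_t|^2\,dxdt$ plus a boundary integral over $\Sigma$ weighted by $|u_t|^2$; since $u=0$ on $\Sigma$ for every $t$, we have $u_t=0$ on $\Sigma$, so this boundary contribution vanishes.

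The heart of the computation is the Laplacian term $-\ii_Q\Delta u^i\,\overline{q}_k\,\partial u^i/\partial x_k\,dxdt$. Integrating by parts in $x_j$ and then symmetrizing the arising second-order term, using $\partial u^i/\partial x_j\,\overline{q}_k\,\partial^2 u^i/\partial x_j\partial x_k=\tfrac12\overline{q}_k\,\partial(|\nabla u|^2)/\partial x_k$ together with one more integration by parts, produces the interior terms $\ii_Q (\partial\overline{q}_k/\partial x_j)(\partial u^i/\partial x_k)(\partial u^i/\partial x_j)\,dxdt$ and $-\tfrac12\ii_Q(\operatorname{div}\overline{q})\,|\nabla u|^2\,dxdt$, together with two boundary integrals over $\Sigma$. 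The crucial geometric fact is that $u=0$ on $\Sigma$ for all $t$ forces $\nabla u$ to be purely normal there, i.e. $\partial u^i/\partial x_k=(\partial u^i/\partial\nu)\,\nu_k$ on $\Sigma$; consequently both boundary integrals collapse into multiples of $\ii_\Sigma\overline{q}_k\nu_k(\partial u/\partial\nu)^2\,d\Sigma$ and combine into the single term appearing on the left-hand side of the identity. Collecting the four contributions and isolating this boundary integral then gives the asserted equality.

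I expect the only delicate point to be the bookkeeping of the $\Sigma$-integrals: one must invoke $u=0$ on $\Sigma$ both to annihilate the $|u_t|^2$ boundary contribution and to replace the full gradient on $\Sigma$ by its normal component, which is precisely what merges the two boundary integrals coming from the Laplacian into $\tfrac12\ii_\Sigma\overline{q}_k\nu_k(\partial u/\partial\nu)^2\,d\Sigma$. All manipulations are legitimate pointwise for the regular solutions assumed in the statement, so that $p$ and the relevant traces are classical; for general weak data the identity follows by approximation, using the continuity of the solution map from Theorem \ref{existenciasol} together with the hidden regularity of $\partial u/\partial\nu$ on $\Sigma$. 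Note finally that the pressure term is not integrated by parts at this stage: it is retained as $\ii_Q (\partial p/\partial x_i)\,\overline{q}_k\,\partial u^i/\partial x_k\,dxdt$ and is treated separately afterwards by means of Lemma \ref{obsX}.
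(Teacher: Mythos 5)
Your proposal is correct and takes essentially the same approach as the paper: the paper's entire proof of Lemma \ref{lema21} is the remark that the identity is obtained exactly as the classical multiplier computation for a single wave equation, with the pressure (and the localized source) simply carried along as right-hand-side force terms, which is precisely what you do, including the key use of $u=0$ on $\Sigma$ to kill the $|u_t|^2$ boundary term and to reduce $\nabla u$ to its normal component so that the two Laplacian boundary integrals merge into $\tfrac12\int\!\!\!\int_{\Sigma}\overline{q}_k\nu_k(\partial u/\partial \nu)^2 d\Sigma$. One cosmetic point: with the equation written as $u_{tt}-\Delta u+\nabla p=h1_{\omega}$, a careful sign count gives the source contribution as $-\int\!\!\!\int_{Q}h^i\overline{q}_k\,\partial u^i/\partial x_k\,dxdt$, so the plus sign in the paper's stated identity appears to be a typo, immaterial in all applications since the lemma is invoked either with $h=0$ or with that term estimated in absolute value.
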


The proof of Lemma \ref{lema21} is the same as in the case of a single wave equation, the difference being that here we see the pressure as a force term in the right-hand side.

%

%\begin{lemma}
%Let $\nu = (\nu_1,\ldots, \nu_n)$ the vector field of exterior normal to $\partial \Omega$. Then, there exists a vector field $k = (k_1\ldots,k_n) \in C^1(\bar{\Omega})^n $ such that 
%$$
%k_i = \nu_i \ \mbox{on} \ \partial \Omega \ \mbox{for} \ i = 1, \ldots, n.
%$$  
%\end{lemma}
%\begin{proof}
%We consider the trace bijection operator $\gamma_0: H^m(\Omega) \rightarrow H^{m-\frac{1}{2}}(\partial \Omega)$. Then, if $\nu_j \in H^{m-\frac{1}{2}}(\partial \Omega)$, there exists $k_j \in H^m(\Omega)$ such that $\gamma_0k_j = \nu_j$. From Sobolev's embedding theorem, we know that for $m > 1+ \frac{n}{2}$ we have $ H^m(\Omega)  \subset C^1(\overline{\Omega})$ continuously, and the results follows.
%
%\end{proof}

\begin{lemma} \label{desigualdadedireta}
Let $(u^0, u^1, h) \in V\times H \times L^2(Q)^N$, then the weak solution of (\ref{eq21}) satisfies:
$$
\int \! \! \! \int_{\Sigma} \biggl(  \frac{\partial u}{\partial \nu}\biggl)^2 d\Sigma \leq C\bigl(|u^1|^2_H + ||u^0||^2_V + ||h||^2_{L^2(Q)^N}\bigl).
$$
\end{lemma}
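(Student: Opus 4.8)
The statement is the classical \emph{hidden regularity} (direct) inequality for the hyperbolic system, and the plan is to prove it by the multiplier method, the only genuinely new feature with respect to the scalar wave equation being the pressure term. By the continuity of the solution map in Theorem \ref{existenciasol} and linearity, it suffices to prove the estimate for smooth data $(u^0,u^1,h)$, for which $u$ and the associated pressure are regular enough to justify every integration by parts, and then to extend it to all of $V\times H\times L^2(Q)^N$ by density: the normal derivatives of the approximating solutions form a Cauchy sequence in $L^2(\Sigma)$ precisely because of the inequality applied to their differences, so the bound passes to the limit.

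The starting point is the multiplier identity of Lemma \ref{lema21}, applied with a field $\overline q\in C^1(\overline\Omega)^N$ satisfying $\overline q=\nu$ on $\partial\Omega$ (and, as explained below, multiplied by a temporal cutoff). Since $u=0$ on $\Sigma$ forces $\nabla u=\frac{\partial u}{\partial\nu}\nu$ there, one has $\overline q_k\nu_k=1$ on $\partial\Omega$, so the left-hand side of the identity reduces to $\tfrac12\ii_{\Sigma}\bigl(\frac{\partial u}{\partial\nu}\bigr)^2 d\Sigma$, the quantity to be estimated. Writing $E(t)=|u_t(t)|_H^2+\|u(t)\|_V^2$, the three terms of the identity that already occur for the wave equation are controlled as usual: the time-boundary term $(u_t,\overline q\cdot\nabla u)\big|_0^T$ by $C\bigl(E(0)+E(T)\bigr)$ (it in fact vanishes once the cutoff is in place), the two interior integrals by $C\|\overline q\|_{C^1(\overline\Omega)}\int_0^T E(t)\,dt$, and the source term $\ii_Q h^i\overline q_k\frac{\partial u^i}{\partial x_k}\,dxdt$ by $C\bigl(\|h\|_{L^2(Q)^N}^2+\int_0^T E(t)\,dt\bigr)$. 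All of these are controlled once one has $\sup_{[0,T]}E(t)\le C\bigl(|u^1|_H^2+\|u^0\|_V^2+\|h\|_{L^2(Q)^N}^2\bigr)$, which follows from the energy identity of Theorem \ref{existenciasol} by Young's inequality and Gronwall's lemma.

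The pressure term $\ii_Q\frac{\partial p}{\partial x_i}\,\overline q_k\frac{\partial u^i}{\partial x_k}\,dxdt$ has no scalar analogue and is the main obstacle, since $p$ is only of low regularity. I would first rewrite it via Lemma \ref{obsX} (whose computation uses only $u=0$ on $\Sigma$ and $\mathrm{div}\,u=0$, hence applies verbatim to $u$) as $\langle\nabla p,\,v\rangle$, where $v=-u\cdot\nabla\overline q+u\,\mathrm{div}\,\overline q$. The virtue of this form is that $v$ is a pointwise multiple of $u$, so it vanishes on the lateral boundary and, using $\mathrm{div}\,u=0$, satisfies $\|v\|_{L^2(0,T;V)}\le C\|u\|_{L^2(0,T;V)}$ and $\|v_t\|_{L^2(Q)}\le C\bigl(\|u\|_{L^2(0,T;H)}+\|u_t\|_{L^2(0,T;H)}\bigr)$, both controlled by $\int_0^T E$. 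One then pairs $v$ against $\nabla p\in H^{-1}(Q)^N$ in the $H^{-1}(Q)$--$H^1_0(Q)$ duality, invoking the pressure bound $\|\nabla p\|_{H^{-1}(Q)^N}^2\le C\bigl(E(0)+\|h\|_{L^2(Q)^N}^2\bigr)$ of the kind already used in Lemma \ref{equivalencia-X}, to conclude $|\langle\nabla p,v\rangle|\le C\bigl(E(0)+\|h\|_{L^2(Q)^N}^2\bigr)$. The subtle point is that this duality requires $v\in H^1_0(Q)$, i.e.\ $v$ must vanish at $t=0,T$ as well; this fails for the plain multiplier, and it is precisely why one inserts a temporal cutoff $\eta$ into the multiplier $\overline q\cdot\nabla u$, exactly as in the proof of Lemma \ref{equivalencia-X}.

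Inserting the cutoff $\eta$ replaces the left-hand side by $\tfrac12\int_0^T\eta\int_{\partial\Omega}\bigl(\frac{\partial u}{\partial\nu}\bigr)^2$, which only yields the estimate on an interior slab $(\epsilon,T-\epsilon)\times\partial\Omega$. To recover the full $\Sigma$ with the same constant I would exploit that the system is autonomous and time-reversible: extend $u$ to a solution $\widetilde u$ on a larger interval $(-a,T+a)$ by prolonging the source by zero, so that energy is conserved on $(-a,0)$ and $\widetilde E(-a)=E(0)$, and then run the previous argument on $(-a,T+a)$ with a cutoff $\eta$ equal to $1$ on $[0,T]$ and vanishing at the endpoints $-a$ and $T+a$. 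This gives $\ii_\Sigma\bigl(\frac{\partial u}{\partial\nu}\bigr)^2 d\Sigma\le C\bigl(|u^1|_H^2+\|u^0\|_V^2+\|h\|_{L^2(Q)^N}^2\bigr)$ for smooth data, and the density argument of the first paragraph completes the proof.
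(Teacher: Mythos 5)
Your proposal follows the paper's outer template (prove the estimate for regular solutions via the multiplier identity of Lemma \ref{lema21}, then conclude by density), but at the one point where this lemma differs from the scalar wave equation --- the pressure term --- you take a genuinely different and much heavier route. The paper simply chooses the multiplier $\overline q(x)=x$, which is admissible here because $\Omega$ is assumed star-shaped, so $x\cdot\nu\geq\gamma>0$ on $\partial\Omega$ and the boundary term in the identity still dominates $\gamma\ii_{\Sigma}\bigl(\frac{\partial u}{\partial\nu}\bigr)^2d\Sigma$; for this particular field the pressure term vanishes \emph{identically}: by Lemma \ref{obsX} with $m(x)=x$, since $\nabla m=I$ and $\mathrm{div}\, m=N$,
\begin{equation*}
\ii_{Q}\frac{\partial p}{\partial x_i}\,x_k\frac{\partial u^i}{\partial x_k}\,dxdt
=(N-1)\ii_{Q}\nabla p\cdot u\,dxdt
=-(N-1)\ii_{Q}p\,(\mathrm{div}\,u)\,dxdt=0,
\end{equation*}
using $\mathrm{div}\,u=0$ and $u=0$ on $\Sigma$. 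With the pressure gone, no pressure bound, no temporal cutoff and no time-extension trick are needed, and the estimate on the whole of $\Sigma$ follows verbatim as for the wave equation. Your route --- extending $\nu$ to a field on $\overline\Omega$, inserting a cutoff so that the test function lies in $H^1_0(Q)^N$, estimating the pressure via the $H^{-1}$--$H^1_0$ duality together with $\|\nabla p\|^2_{H^{-1}(Q)^N}\leq C\bigl(E(0)+\|h\|^2_{L^2(Q)^N}\bigr)$, then recovering the full time interval by time-reversibility and energy conservation --- is viable, and it is exactly the mechanism the paper itself uses for the \emph{interior} inequality of Lemma \ref{equivalencia-X}; what it buys is generality, since it never uses star-shapedness of $\Omega$. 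What it costs, besides the extra machinery, is three details you should repair: the extension of $\nu$ must be taken in $C^2(\overline\Omega)^N$ (or $W^{2,\infty}$), not merely $C^1$, because $\nabla v$ with $v=-u\cdot\nabla\overline q+u\,\mathrm{div}\,\overline q$ involves second derivatives of $\overline q$ and you need $v\in H^1_0(Q)^N$; this $v$ is not divergence-free, so it lies in $L^2(0,T;H^1_0(\Omega)^N)$ rather than $L^2(0,T;V)$; and the time-dependent multiplier $\eta\,\overline q\cdot\nabla u$ produces an additional cross term $-\ii_{Q}\eta'\,u_t\,(\overline q\cdot\nabla u)\,dxdt$ not present in the identity of Lemma \ref{lema21}, which must be stated and absorbed into $\int_0^T E(t)\,dt$ (it can be).
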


\begin{proof}
The proof is obtained exactly as in the case of  the wave equation,  first showing the result for regular solutions. Indeed, in this case we must  take the vector field $\overline{q}$ in Lemma \ref{lema21} to be the vector field $\overline{q}(x) = x$ and use the fact that 
$$\int \! \! \! \int_{Q}\frac{\partial p}{\partial x_i} \overline{q}_k\frac{\partial u^i}{\partial x_k}dxdt  = 0.$$ 
\end{proof}

\begin{proof}[Proof of Lemma \ref{lemma22}]
Without loss of generality, we assume that $\phi$ is regular and then work with the equivalent problem   \eqref{hyperpress-1}. Using Lemma \ref{lema21}, with $\overline{q}$ being  the vector field $m(x) = x$, we have
$$
\frac{1}{2}\int \! \! \! \int_{\Sigma}m\cdot\nu\biggl(\frac{\partial \phi}{\partial \nu}\biggl)^2d\Sigma =  (\phi_t(.), m(x) \nabla \phi(.))\bigl|_0^T + \int \! \! \! \int_{Q}|\nabla \phi |^2dxdt  + \frac{N}{2} \int \! \! \! \int_{Q}\bigl(  |\phi_t|^2 - |\nabla \phi|^2 \bigl)dxdt.
$$
Next, multiplying  $ \eqref{hyperpress-1}_1$ by $\phi$ and integrating by parts,  we easily see that
$$
(\phi_t(.),\phi(.))\bigl|_0^T =  \int \! \! \! \int_{Q} |\phi_t|^2dxdt -  \int \! \! \! \int_{Q} |\nabla \phi|^2dxdt.
$$
Then, using this last identity and the fact that 
$$
  |\phi_t(t)|_H^2 + ||\phi(t)||_V^2  =  |\phi^1|_H^2 + ||\phi^0||_V^2 \ \ \ \forall t \in [0,T],
$$
we obtain
$$
(\phi_t(.),m\nabla u(.) +\frac{N-1}{2}u(.))\bigl|_0^T + T \bigl( |\phi^1|_H^2 + ||\phi^0||_V^2 \bigl) = \frac{1}{2} \int \! \! \! \int_{\Sigma}m\cdot\nu\biggl(\frac{\partial \phi}{\partial \nu}\biggl)^2d\Sigma.
$$
We also have
$$
\bigl|   m\nabla u(t) +\frac{N-1}{2}u(t) \bigl|^2 \leq R_0 |\nabla \phi(t) |^2 \  \ \ \forall t  \in [0,T],
$$
which implies, by Gronwall inequality, that
$$
\biggl|  \bigl( \phi_t(.),m \nabla \phi(.) +\frac{N-1}{2} \phi(.) ) \bigl|_0^T \biggl| \leq  2R_0 \bigl(|\phi^1|_H^2 + ||\phi^0||_V^2\bigl).
$$
%R(x_0) \bigl(   ||\phi'(T)||_H ||\nabla \phi (T)||_V +  ||\phi'(0)||_H||\nabla \phi(0) ||_V\bigl)
Finally, combining all the above estimates, we conclude that
$$
\bigl(T-2R_0\bigl)\bigl(|\phi^1|_H^2 + ||\phi^0||_V^2\bigl) \leq \frac{R_0}{2} \ \!\!\!\!\int_{\Sigma}\biggl(\frac{\partial \phi}{\partial \nu}\biggl)^2d\Sigma,
$$
which is exactly \eqref{obsfronteira}.

\end{proof}

\end{document}